\pgfplotsset{compat=1.18}
  \def\({}%
  \def\){}%
\begin{document}

\title{A nodally bound-preserving finite element method for hyperbolic 
convection-reaction problems}

\author[1,*]{Ben S. Ashby}

\author[1]{Abdalaziz Hamdan}

\author[1,2]{Tristan Pryer}

\address{$^1$ Institute for Mathematical Innovation\\ University of
  Bath, Bath, UK. \\$^2$ Department of Mathematical Sciences \\
  University of Bath, Bath, UK.}

\begin{abstract}
  In this article, we present a numerical approach to ensure the
  preservation of physical bounds on the solutions to linear and
  nonlinear hyperbolic convection-reaction problems at the discrete
  level. We provide a rigorous framework for error analysis,
  formulating the discrete problem as a variational inequality and
  demonstrate optimal convergence rates in a natural norm. We
  summarise extensive numerical experiments validating the
  effectiveness of the proposed methods in preserving physical bounds
  and preventing unphysical oscillations, even in challenging
  scenarios involving highly nonlinear reaction terms. 
\end{abstract}

\maketitle

\section{Introduction}

Ensuring that numerical approximations respect certain physical bounds
presents a significant challenge across various fields in
computational modelling. Many important PDE models rely on solutions
that adhere to these bounds to maintain physical validity. For
instance, in phase-field modelling, solutions must strictly preserve
global maxima and minima \cite{wang2022unconditionally}, while in
incompressible flow simulations, the velocity field is required to
remain divergence-free \cite{schroeder2018divergence}. Similarly, in
applications such as kinetic equations, dose deposition modelling for
external beam radiotherapy
\cite{AshbyChronholmHajnalLukyanovMacKenziePimPryer:2024}, or nuclear
engineering \cite{calloo2024cycle}, maintaining physical accuracy is
critical to ensure reliable and safe operations, particularly when
computational constraints limit the achievable numerical
fidelity. These bounds, derived from fundamental physical principles,
become especially important when the solution serves as input to
subsequent models, where unphysical values could propagate errors and
lead to erroneous predictions downstream.

A common manifestation of the failure of numerical solutions to
satisfy these bounds is the oscillation of the solution around layers
or irregularities in the PDE solution. For example, singularly
perturbed elliptic problems often exhibit boundary layers, while
hyperbolic problems can support discontinuities, which can lead to
under- and over-shoots numerically (see \S\ref{sec:numerics}). Even
many stabilised numerical methods can exhibit unphysical oscillations
that overshoot these maxima and minima, as illustrated in Figure
\ref{fig:ex_2_x_sections}.

The aim of the method presented in this work is to prevent spurious
numerical behaviour from violating maximum principles. We propose a
finite element method that enforces physical bounds at nodal points by
discretising the problem as a variational inequality. This ensures
that the solution lies within a finite-dimensional, closed and convex
subset of the natural solution space for the PDE. We provide bounds on
the approximation error of the proposed bound-preserving finite
element method, demonstrating optimal convergence rates for a scalar
hyperbolic equation. This formulation enables rigorous error analysis
and yields a simple implementation whilst also offering flexibility in
computational mesh selection. Our analysis focusses on
convection-reaction equations, a class of problems where bound
preservation is particularly important due to the challenges posed by
sharp gradients, boundary layers and discontinuities. Moreover, we
extend our approach to nonlinear problems, leveraging appropriate
quasi-norms to establish best-approximation results within the
framework of variational formulations.

Standard finite element methods often fail to respect physical bounds
without imposing additional constraints or modifications. For example,
the most popular finite element methods for incompressible flows often
fail to ensure pointwise divergence-free solutions, and proving energy
stability in dissipative systems is similarly challenging (see, e.g.,
\cite{GiesselmannMakridakisPryer:2014,CelledoniJackaman:2021}). This
was formalised early on in \cite{CR73}, which demonstrated that
piecewise linear finite elements respect such bounds only under
specific mesh conditions.

A variety of methods have been proposed to address these challenges. A
notable approach is the development of methods satisfying the
\emph{Discrete Maximum Principle (DMP)}, particularly for
convection-dominated convection-diffusion problems (see
\cite{MH85,XZ99,BE05,Kuz07,BJK17,BJK23,wangdiscrete}, among
others). These methods often involve nonlinear stabilisation, where
additional nonlinear terms are introduced to ensure that the discrete
solution does not violate the maximum principle. Despite their
effectiveness, these methods frequently rely on piecewise linear
elements and extending them to higher-order elements introduces
stricter mesh conditions and additional analytical complexities.

Related approaches include nodally bound-preserving stabilised methods
\cite{barrenechea2024nodally,amiri2024nodally,BarrenecheaPryerTrenam:2024},
where the discrete solution is shown to satisfy a variational
inequality. The idea of enforcing bounds by formulating the problem as
a variational inequality has also appeared in
\cite{chang2017variational,kirby2024high,keith2024proximal}. These
methods often involve solving nonlinear problems, a necessity
highlighted by the Godunov barrier theorem, which asserts that
achieving high-order accuracy while preserving monotonicity or bounds
is generally impossible with linear methods. Consequently,
nonlinearity is an inherent feature of any method aiming to respect
maximum principles whilst retaining higher-order accuracy.

The rest of the paper is structured as follows: In
\S\ref{sec:problem_setup} we introduce a model hyperbolic problem and
discuss its solution, regularity and approximation by the finite
element method. We then conduct an error analysis in
\S\ref{sec:linear_analysis}. Following in \S\ref{sec:nonlinear}, we
provide a best approximation result for a hyperbolic problem with
singular nonlinearity in the reaction term. Implementation details
including methods to obtain an approximation to the finite element
solution are given in \S\ref{sec:iterative_methods}. Finally,
numerical experiments are presented in \S \ref{sec:numerics}.
 
\section{Problem formulation \& discretisation}\label{sec:problem_setup}

In what follows, $\W \subseteq \reals^d$, $d = 2, 3$, is assumed to be
a Lipschitz domain, ensuring that a unit outward normal vector
$\vec{n}$ is defined almost everywhere on $\partial \W$. For any
measurable subset $\omega \subseteq \W$, let $\leb{p}(\omega)$ denote
the space of $p$-th power (Lebesgue) integrable functions over
$\omega$, equipped with the norm $\Norm{\cdot}_{\leb{p}(\omega)}$. The
$\leb{2}$ inner product over $\omega$ is written as
$\ltwop{u}{v}_{\omega} := \int_{\omega} uv \diff x$, with the
convention that the subscript is omitted if $\omega = \W$.

The standard Sobolev space $\sob{k}{p}(\omega)$ is defined as the
space of functions in $\leb{p}(\omega)$ whose weak derivatives of
order at most $k$ are also in $\leb{p}(\omega)$. As is customary,
$\sob{k}{2}(\omega)$ is denoted by $\sobh{k}(\omega)$.

For $\vec b : \W \to \reals^d$ and $c : \W \to \reals$, we introduce
the linear hyperbolic problem given by
\begin{equation}\label{eq:pde_classical_form}
  \begin{split}
  \vec b \cdot \grad u + c u 
  &= 
  f \quad \text{in}\,\,\W\\
  u &= g\quad \text{on}\,\,\Gamma_-,
  \end{split}
\end{equation}
where $\Gamma_-$ is the inflow boundary defined by
the flow field $\vec b$, that is,
\begin{equation*}
  \Gamma_- 
  =
  \{\vec x \in \partial \W : \vec b(\vec x) \cdot \vec n(\vec x) < 0\}, 
\end{equation*}
and
\begin{equation}
  \Gamma_+ = \partial\W \setminus \Gamma_-.
\end{equation}
The analysis of this linear problem is significantly more complicated
than for advection-diffusion-reaction problems, as the smoothing
properties enjoyed by elliptic operators are not present here. For
example, even when $\vec b$ is smooth, discontinuities can propagate
along streamlines, leading to solutions which are quite irregular. We
elaborate further on this point in Remark
\ref{rem:regularity_remarks}.

We now proceed to discuss questions of existence, uniqueness and
regularity of solutions to \eqref{eq:pde_classical_form}. Let $\vec b
\in C^1(\bar{\W})$ and $ c \in C^0(\bar{\W})$ and suppose there exists
$\mu > 0$ such that
\begin{equation}\label{eq:coercivity_assumption}
  c(x) - \frac 1 2 \grad \cdot \vec b(\vec x) \geq \mu \text{ for almost every } \vec x \in \W.
\end{equation}
We define the function space
\begin{equation}\label{eq:graph_space}
  \operatorname{H}_-(\W)
  = 
  \{v \in \leb{2}(\W) : \vec b \cdot \grad v + c v \in \leb{2}(\W), \,\,
  (\vec b \cdot \vec n) v = 0 \,\,\text{on}\,\, \Gamma_-\},
\end{equation}
where boundary values are interpreted in the trace sense. We refer to
\cite[\S3.1]{scott2022transport} for a justification of sufficient
boundary regularity so that the boundary condition included in this
definition is meaningful. The space $\operatorname{H}_-(\W)$ is a
Hilbert space when equipped with norm
\begin{equation}
  \Norm{v}^2_{\operatorname{H}_-(\W)}
  =
  \Norm{v}^2_{\leb{2}(\W)}
  +
  \Norm{\vec b \cdot \grad v + c v}^2_{\leb{2}(\W)},
\end{equation}
allowing us to give the variational formulation of problem
\eqref{eq:pde_classical_form}: find $u \in \operatorname{H}_-(\W)$ such that
\begin{equation}\label{eq:pde_variational_form}
  a(u, v)
  =
  l(v) \quad \forall v \in \leb{2}(\W),
\end{equation}
where 
\begin{equation}\label{eq:a_bilinear}
  a(w, v)
  =
  \int_{\W} \left( \vec b \cdot \grad w + c w\right) v,
\end{equation}
and 
\begin{equation}\label{eq:l_linear}
  l(v) = \int_{\W}fv.
\end{equation}

The following result derived from Green's formula will be frequently used, and is stated
here for convenience.
\begin{equation}\label{eq:green_formula}
  \int_{\W}\left(\vec b \cdot \grad v\right)w 
  =
  \int_{\partial \W} vw (\vec b \cdot \vec n)
  -
  \int_{\W}v \left(\vec b \cdot \grad w\right) 
  -
  \int_{\W}vw \operatorname{div}(\vec b).
\end{equation}

We now present the main existence result, which assumes \lq good' behaviour of
the advection field $\vec b$

\begin{proposition}[Existence \& uniqueness of solutions \cite{scott2022transport}]
  Suppose that $\W$ is a Lipschitz domain, and assume that the vector field $\vec b$
  lies in $C^1(\bar{\W})$, with all vector components strictly positive. We additionally
  assume $c \in C(\bar{\W})$. We assume additionally that $g$ can be extended to $\W$
  such that the result lies in $\operatorname{H}_-(\W)$. For $f \in \leb{2}(\W)$, there
  exists a unique solution $u \in \operatorname{H}_-(\W)$ to problem
  \eqref{eq:pde_variational_form}.
\end{proposition}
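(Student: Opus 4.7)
I would prove this by applying the Banach-Necas-Babuska (BNB) theorem to \eqref{eq:pde_variational_form} on the asymmetric trial-test pair $(\operatorname{H}_-(\W), \leb{2}(\W))$. A standard lift $u = \tilde u + \tilde g$, using the assumed extension $\tilde g \in \operatorname{H}_-(\W)$, reduces the analysis to the homogeneous problem with modified right-hand side $\tilde f = f - \vec b \cdot \grad \tilde g - c\tilde g \in \leb{2}(\W)$, for which continuity of $a$ on $\operatorname{H}_-(\W) \times \leb{2}(\W)$ and of $l$ on $\leb{2}(\W)$ are immediate from the definitions.

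The two remaining ingredients are the inf-sup condition and non-degeneracy in the test variable. Applying Green's formula \eqref{eq:green_formula} with $w = v$ together with the coercivity assumption \eqref{eq:coercivity_assumption} yields
\begin{equation*}
a(v,v) = \int_\W \left(c - \tfrac12 \grad \cdot \vec b\right) v^2 + \tfrac12 \int_{\Gamma_+} v^2 (\vec b \cdot \vec n) \geq \mu \|v\|^2_{\leb{2}(\W)} \quad \forall v \in \operatorname{H}_-(\W),
\end{equation*}
since the inflow boundary term vanishes and the outflow term is non-negative. Uniqueness of solutions follows at once by testing the difference of two candidate solutions against itself. For the inf-sup, given $u \in \operatorname{H}_-(\W)$ I would choose the test function $v := u + (\vec b \cdot \grad u + c u) \in \leb{2}(\W)$; combining the energy estimate with $\|v\|_{\leb{2}(\W)} \leq \sqrt{2}\|u\|_{\operatorname{H}_-(\W)}$ then produces an inf-sup constant proportional to $\min(\mu,1)$.

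The principal obstacle is the non-degeneracy condition: if $v \in \leb{2}(\W)$ satisfies $a(u,v) = 0$ for all $u \in \operatorname{H}_-(\W)$, one must show $v = 0$. Testing first against $u \in C_c^\infty(\W)$ identifies $v$ as a distributional solution of the adjoint equation $-\vec b \cdot \grad v + (c - \grad \cdot \vec b) v = 0$; this in turn forces $\vec b \cdot \grad v \in \leb{2}(\W)$, placing $v$ into the adjoint graph space. An integration by parts against general $u \in \operatorname{H}_-(\W)$ then extracts the vanishing of $v$ on $\Gamma_+$ in the appropriate trace sense, after which a symmetric energy estimate for the adjoint, whose positivity coefficient remains $c - \tfrac12 \grad \cdot \vec b \geq \mu$, concludes that $v = 0$. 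The delicate point is that $v$ is only $\leb{2}$ a priori, so promoting it to the adjoint graph space and assigning a meaningful outflow trace leans on the $C^1$-regularity and strict componentwise positivity of $\vec b$, which ensure that characteristics meet $\partial \W$ transversally, together with the trace and density theory developed in \cite{scott2022transport}. With these in hand, BNB delivers existence and uniqueness for $\tilde u$, and hence for $u = \tilde u + \tilde g$.
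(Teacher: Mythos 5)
The paper itself does not prove this proposition: it is quoted from \cite{scott2022transport}, so there is no in-house argument to compare against line by line. Your BNB sketch on the pair $(\operatorname{H}_-(\W),\leb{2}(\W))$ is the standard well-posedness argument for transport/Friedrichs-type operators and is essentially sound: continuity is immediate, the test choice $v=u+(\vec b\cdot\grad u+cu)$ together with the energy identity obtained from \eqref{eq:green_formula} and \eqref{eq:coercivity_assumption} gives an inf-sup constant of order $\min(\mu,1)$, and the adjoint non-degeneracy argument (distributional adjoint equation, promotion of $v$ to the adjoint graph space, extraction of the outflow trace condition, adjoint energy estimate with the same coefficient $c-\tfrac12\grad\cdot\vec b\geq\mu$) is the right structure. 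You also correctly identify where the real technical weight sits: Green's formula and the energy identity for functions that are only in the graph space, the existence of weighted $\leb{2}$ traces, density of smooth functions, and density of outflow traces of $\operatorname{H}_-(\W)$ functions needed to conclude $v=0$ on $\Gamma_+$. These are precisely the ingredients developed in \cite{scott2022transport}, which is why the paper cites rather than proves the result; your proof is therefore not circular, but it is honest to say it rests on the same trace/density machinery as the cited source rather than replacing it.

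Two small caveats. First, the coercivity hypothesis \eqref{eq:coercivity_assumption} and the condition $c\in C(\bar\W)$, $\vec b\in C^1(\bar\W)$ are standing assumptions in the paper but are not restated in the proposition; your argument needs \eqref{eq:coercivity_assumption} explicitly, so it should be listed among the hypotheses. Second, the assertion that strict componentwise positivity of $\vec b$ makes characteristics meet $\partial\W$ transversally is not literally true on a general Lipschitz domain (there can be boundary portions where $\vec b\cdot\vec n=0$); the role of this hypothesis in \cite{scott2022transport} is tied to their specific geometric setting and trace analysis, so it is safer to invoke it only through that reference, as you ultimately do. Relatedly, note that the paper's definition \eqref{eq:graph_space} already builds in the homogeneous inflow condition, so the assumed extension of $g$ should be read as lying in the graph space without that condition; your lifting step is consistent with this intended reading.
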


\begin{remark}[Further remarks on regularity]\label{rem:regularity_remarks}
  The conditions for the solution to problem
  \eqref{eq:pde_variational_form} to have full
  $\sobh{1}(\W)$-regularity are strong, requiring $f \in
  \sobh{1}_0(\W)$ in addition to higher regularity on $\vec{b}$ (see
  \cite{rauch1972l2}). However, in special cases, less regular
  solutions can still be shown to exist. For example, as discussed in
  \cite[Part III, 1.1]{stynes_robust}, in the special case where $\W$
  is the unit square and the components of $\vec{b}$ are bounded away
  from zero, there is a unique solution that lies in $\operatorname{H}_-(\W)$ even
  when $g \in \leb{2}(\Gamma_-)$, meaning problem
  \eqref{eq:pde_classical_form} supports discontinuous solutions.

  More generally, as shown in \cite[Theorem 3.1]{stynes_robust}, the
  problem admits a unique solution in $\operatorname{H}_-(\W)$ provided $g$ has an
  extension in $\operatorname{H}_-(\W) \cap \leb{q}(\W)$ for some $q \geq 2$. Similar
  results on the regularity of solutions can be found in
  \cite{scott2022transport,bernard2012steady,girault2010regularite}.

  Of course, the numerical analysis of such cases is complicated by
  this lack of regularity. Indeed, it is precisely in these
  scenarios-where the boundary data or the solution exhibits low
  regularity-that spurious behaviour is frequently observed upon
  discretisation with many numerical methods.
\end{remark}

\begin{remark}[Problems satisfying a priori bounds]
For some choices of $\vec b$, problem \eqref{eq:pde_classical_form}
can be solved analytically using the method of characteristics. One
such case is when $\vec b$ has well-defined, non-intersecting
streamlines with no stationary points. These conditions ensure that
any $\vec x \in \W$ lies on a single characteristic curve which
originates at the inflow boundary, and therefore $u$ is the solution
of an ODE along this characteristic with initial data given by the
inflow boundary condition. Assuming further that $c \geq 0$ and that
the boundary data $g \in \leb{\infty}(\Gamma_-)$, we have for example
that if $f = 0$ almost everywhere,
\begin{equation}
  u(\vec x)
  \leq
  \Norm{g}_{\leb{\infty}(\Gamma_-)}.
\end{equation}
\end{remark}

\subsection{Finite element discretisation}

Let $u$ be the solution of \eqref{eq:pde_variational_form} and,
without loss of generality, we assume that $u$ satisfies the bound
$u(\vec x) \in [0,1]$ for almost every $\vec x \in \W$.  We assume
that the domain $\W$ is subdivided into a conforming, shape-regular
triangulation $\T$, namely, $\T$ is a finite family of sets such that
\begin{enumerate}
  \item $K\in\T{}$ implies $K$ is an open simplex or box,
  \item for any $K,J\in\T{}$ we have that $\overline K\cap\overline J$ is a full
    lower-dimensional simplex (i.e., it is either $\emptyset$, a vertex,
    an edge or the whole of $\overline K$ and $\overline J$) of both
    $\overline K$ and $\overline J$ and
  \item $\bigcup_{K\in\T{}}\overline K=\overline\W$.
  \end{enumerate}
We additionally assume that the elements align with transitions
between inflow and outflow boundaries. The finite element space $\fes$
is defined to be
\begin{equation}
  \fes
  :=
  \{v_h \in C^0(\W) : v_h\vert_K \in \mathcal{R}(K)\,\, \forall K \in \T,\,
  v_h =0 \text{ on } \Gamma_-\},
\end{equation}
where $\mathcal{R}(K)$ is either $\mathbb{P}_k(K)$, the space of
polynomials of degree $k$, or $\mathbb{Q}_k(K)$, the space of
polynomials of total degree $k$, depending upon the triangulation. For
an element $K \in \T$, we denote the diameter of $K$ by $h_K$, with $h
:= \max_{K \in \T} h_k$.  Finally, let $\vec x_1, ..., \vec x_N$ be
the union of the set of interior degrees of freedom and those that lie
on the outflow boundary.

We define the convex subset $K_h$ of
$\fes$ by restricting the nodal values of function in $\fes$, that is 
\begin{equation}\label{eq:definition_of_K_h}
  K_h 
  := 
  \{v_h \in \fes : v_h(\vec x_i) \in [0, 1],\,\,i=1,\dots,N\}.
\end{equation}

\begin{remark}[Preservation of the bound at the degrees of freedom]
  For polynomial degree, $k=1$, the space $K_h$ consists of precisely
  the finite element functions which satisfy the upper and lower bound
  pointwise since between nodes the functions are (bi)linearly
  interpolated. For polynomial degree, $k=2$, or higher, this is not
  the case, and $K_h$ is the set of finite element functions which are
  \emph{nodally} bound preserving. This property was investigated in
  \cite{barrenechea2024nodally} for second order elliptic problems,
  where the authors obtain their approximations via a nonlinear
  stabilised method rather than a variational inequality. There, the
  bound preserving approximation was shown to be equivalent to the
  solution of a discrete variational inequality.
\end{remark}
Let $\delta_K$, $K \in \T$, be positive real numbers and let $a_h$
denote the bilinear form associated with the SUPG stabilisation method
\cite{brooks1982streamline, johnson1984finite}, defined as
\begin{equation}\label{eq:supg_bilinear_form}
  a_h(w_h,v_h)
  =
  \int_{\W}\left(\vec b \cdot \grad w_h + c v_h\right)v_h 
  +
  \sum_{K\in \T}\delta_K
  \int_{K}\left(\vec b \cdot \grad w_h + c w_h\right)\vec b \cdot \grad v_h.
\end{equation}
The numbers $\delta_K$ are the SUPG parameters which determine the
local degree of stabilisation. The bilinear form $a_h$ has an
associated norm for $w\in \operatorname{H}_-$
\begin{equation}\label{eq:triplenorm}
  \tripleNorm{w}^2
  :=
  \mu\Norm{w}^2_{\leb{2}(\W)} 
  +
  \sum_{K\in \T}\Norm{\delta_K^{\frac 1 2}\vec b \cdot \grad w}^2_{\leb{2}(K)}
  +
  |w|^2_{\Gamma^+},
\end{equation}
 where
\begin{equation*}
  |w|^2_{\Gamma^+}
  :=
  \int_{\Gamma^+} (\vec b \cdot \vec n) w^2.
\end{equation*}
Then the finite element discretisation is to find $u_h \in K_h$ such that
\begin{equation}\label{eq:discrete_inequality}
  a_h(u_h, v_h - u_h)
  \geq
  l_h(v_h - u_h) \quad \forall v_h \in K_h,
\end{equation}
where
\begin{equation*}
  l_h(v_h)
  :=
  \int_{\W} fv_h + \sum_{K\in \T}
  \delta_K \int_K f \left(\vec b \cdot \grad v_h\right).
\end{equation*}

\section{Error analysis}\label{sec:linear_analysis}

In this section we prove the main result of this work: an error bound for
$\tripleNorm{u - u_h}$, where $u_h$ is a bound-preserving approximation to $u
\in \operatorname{H}_-(\W)$. The key idea is that we can combine the weak forms of the
variational problem \eqref{eq:pde_variational_form} and the discrete
\emph{inequality} \eqref{eq:discrete_inequality} and use the coercivity and
continuity of $a_h$ and the consistency of the SUPG formulation to derive a best
approximation result. Consistency is the subject of the next lemma. 

\begin{lemma}[Consistency of the SUPG method]\label{lem:consistency}
  Let $u$ be the solution of problem
  \eqref{eq:pde_variational_form}. Then for any $v_h \in \fes$,
  \begin{equation*}
    l_h(v_h) - a_h(u, v_h) = 0.
  \end{equation*}
\end{lemma}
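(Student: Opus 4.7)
The plan is to exploit the strong consistency that comes from the variational problem \eqref{eq:pde_variational_form} using test functions in the full space $\leb{2}(\W)$. Because $u \in \operatorname{H}_-(\W)$ implies $\vec b \cdot \grad u + c u \in \leb{2}(\W)$ by definition of the graph space, and because \eqref{eq:pde_variational_form} holds for every $v \in \leb{2}(\W)$, a standard density/test-function argument (e.g.\ testing against $\vec b \cdot \grad u + c u - f$ itself) shows that
\begin{equation*}
  \vec b \cdot \grad u + c u = f \quad \text{almost everywhere in } \W.
\end{equation*}
This pointwise identity is the key ingredient; everything else is a direct substitution.

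Having established this, I would expand $a_h(u, v_h)$ using the definition \eqref{eq:supg_bilinear_form}:
\begin{equation*}
  a_h(u, v_h)
  =
  \int_{\W}\bigl(\vec b \cdot \grad u + c u\bigr) v_h
  +
  \sum_{K \in \T} \delta_K \int_K \bigl(\vec b \cdot \grad u + c u\bigr) \vec b \cdot \grad v_h.
\end{equation*}
Substituting the a.e.\ identity $\vec b \cdot \grad u + c u = f$ into both the Galerkin term and each local SUPG stabilisation term yields
\begin{equation*}
  a_h(u, v_h)
  =
  \int_{\W} f\, v_h
  +
  \sum_{K \in \T} \delta_K \int_K f\, \vec b \cdot \grad v_h
  =
  l_h(v_h),
\end{equation*}
which is precisely the claimed identity.

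The only point that requires care, and which I would expect to be the mild sticking point rather than a genuine obstacle, is justifying that the $\leb{2}(\W)$ variational formulation does upgrade to an a.e.\ identity. Since the statement only requires $v_h \in \fes \subset \leb{2}(\W)$, the stabilisation integrals $\vec b \cdot \grad v_h$ are well defined elementwise, so no additional regularity of $u$ beyond $\operatorname{H}_-(\W)$ is needed; note in particular that the argument makes no use of $v_h$ being discrete or of $v_h$ vanishing on $\Gamma_-$, so the lemma actually holds for any sufficiently regular $v_h$ for which $a_h(u, v_h)$ and $l_h(v_h)$ are defined.
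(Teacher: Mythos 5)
Your proof is correct and takes essentially the same route as the paper: both expand $a_h(u,v_h)$ from \eqref{eq:supg_bilinear_form} and replace $\vec b \cdot \grad u + c u$ by $f$ in the Galerkin and stabilisation terms, which is legitimate because testing \eqref{eq:pde_variational_form} against all of $\leb{2}(\W)$ forces the residual to vanish almost everywhere. Your explicit justification of that a.e.\ identity is a small elaboration the paper leaves implicit, but it is the same argument.
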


\begin{proof}
  Note that
  \begin{equation}
    \begin{split}
      a_h(u,v_h)
      &=
      \int_{\W}\left(\vec b \cdot \grad u + c u\right)v_h 
      +
      \sum_{K\in \T}\delta_K
      \int_{K}\left(\vec b \cdot \grad u + c u\right)\vec b \cdot \grad v_h
      \\
      &=
      \int_{\W} f v_h 
      + 
      \sum_{K\in \T}\delta_K
      \int_{K} f \vec b \cdot \grad v_h
      =
      l_h (v_h),
    \end{split}
  \end{equation}
  by \eqref{eq:pde_variational_form}.
\end{proof}

\begin{lemma}[Continuity \& coercivity properties of $a_h$]\label{lem:cont_coerc}
  Assume that, for all $K \in \T$, the SUPG parameters $\delta_K$ are chosen so
  that
  \begin{equation}\label{eq:delta_k_upper}
    0 
    \leq
    \delta_K 
    \leq 
    \frac{\mu}{\Norm{c}^2_{\leb{\infty}(K)}}.
  \end{equation}  
  Then the bilinear form $a_h$ is coercive over $\operatorname{H}_-$
  with respect to the norm $\tripleNorm{\cdot}$, that is, for $w\in
  \operatorname{H}_-$
  \begin{equation}
    a_h(w, w) 
    \geq 
    \frac 1 2 \tripleNorm{w}^2.
  \end{equation}
  Furthermore, for $w\in \operatorname{H}_-$ we define the norm
  \begin{equation}\label{eq:star_norm}
    \tripleNorm{w}_*^2 
    := 
    \tripleNorm{w}^2 + \sum_{K \in \T}\delta_K^{-1}\Norm{w}^2_{\leb{2}(K)}.
  \end{equation}
  Then, for $w, v\in \operatorname{H}_-$ $a_h$ satisfies the
  continuity result
  \begin{equation}\label{eq:a_h_continuous}
    a_h(w, v) 
    \leq 
    C 
    \tripleNorm{w}_* 
    \tripleNorm{v}.
  \end{equation}
\end{lemma}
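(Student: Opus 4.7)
The plan is to prove coercivity and continuity separately, exploiting the decomposition $a_h = a + s_h$ where $a$ is the original bilinear form and $s_h$ is the element-wise SUPG stabilisation. I would first test with $v=w$ and treat the two pieces independently. On the consistent part $\int_\W (\vec b\cdot\grad w + cw)w$, I would apply Green's formula \eqref{eq:green_formula} to rewrite $\int_\W (\vec b \cdot \grad w)w$ as $\frac{1}{2}\int_{\partial\W}w^2(\vec b\cdot\vec n) - \frac{1}{2}\int_\W w^2 \operatorname{div}\vec b$; using the boundary condition $(\vec b\cdot\vec n)w = 0$ on $\Gamma_-$ built into $\operatorname{H}_-(\W)$, the boundary integral reduces to $\tfrac{1}{2}|w|^2_{\Gamma^+}$, and the coercivity assumption \eqref{eq:coercivity_assumption} yields $\mu\Norm{w}^2_{\leb{2}(\W)}$ from the remaining volume term.

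For the stabilisation contribution $\sum_K \delta_K\int_K(\vec b\cdot\grad w + cw)(\vec b\cdot\grad w)$, I would expand and treat the cross term $\sum_K \delta_K \int_K cw(\vec b\cdot\grad w)$ with Young's inequality with parameter $\tfrac{1}{2}$, giving $-\tfrac{1}{2}\sum_K \delta_K \Norm{\vec b\cdot\grad w}^2_{\leb{2}(K)} - \tfrac{1}{2}\sum_K \delta_K \Norm{cw}^2_{\leb{2}(K)}$. The hypothesis \eqref{eq:delta_k_upper} ensures $\delta_K \Norm{c}^2_{\leb{\infty}(K)} \leq \mu$, so the second piece absorbs into $-\tfrac{\mu}{2}\Norm{w}^2_{\leb{2}(\W)}$, while the first halves the streamline seminorm. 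Combining with the consistent part produces exactly $\tfrac{1}{2}\tripleNorm{w}^2$.

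For continuity, the subtle point—and what the asymmetric $\tripleNorm{\cdot}_*$ versus $\tripleNorm{\cdot}$ is designed to accommodate—is the term $\int_\W (\vec b\cdot\grad w)v$, which cannot be bounded directly since no $\vec b\cdot\grad v$ factor appears. I would again invoke \eqref{eq:green_formula} to transfer the derivative onto $v$: since both $w,v\in\operatorname{H}_-(\W)$, the boundary trace reduces to $\int_{\Gamma^+}(\vec b\cdot\vec n)wv$, estimated by Cauchy-Schwarz as $|w|_{\Gamma^+}|v|_{\Gamma^+}$, the $\operatorname{div}\vec b$ term is dominated by $\Norm{w}_{\leb{2}}\Norm{v}_{\leb{2}}$, and the crucial term $\int_\W w(\vec b\cdot\grad v)$ is split element-wise and handled with weights $\delta_K^{-1/2}$ on $w$ and $\delta_K^{1/2}$ on $\vec b\cdot\grad v$, producing $\tripleNorm{w}_*\tripleNorm{v}$. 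The reaction term $\int_\W cwv$ and the stabilisation cross term $\sum_K \delta_K\int_K cw(\vec b\cdot\grad v)$ are bounded by weighted Cauchy-Schwarz, using $\delta_K\Norm{c}^2_{\leb{\infty}(K)}\leq\mu$ to keep the constants clean, and the pure streamline term $\sum_K\delta_K\int_K(\vec b\cdot\grad w)(\vec b\cdot\grad v)$ is immediate.

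The main obstacle I anticipate is bookkeeping the asymmetry: the $\delta_K^{-1}\Norm{w}^2_{\leb{2}(K)}$ contribution to $\tripleNorm{w}_*^2$ must appear only on the $w$-side, so one must be careful that the integration by parts is performed precisely on the term lacking a $\vec b\cdot\grad v$ factor, and that the boundary integral is controlled by the $\Gamma^+$-seminorms present on both sides of the inequality. Once the asymmetric weighting is in place, every remaining step is a routine application of Cauchy-Schwarz combined with \eqref{eq:delta_k_upper}.
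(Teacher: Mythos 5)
Your proposal is correct and follows essentially the same route as the paper: Green's formula plus the $\Gamma_-$ boundary condition and the coercivity assumption \eqref{eq:coercivity_assumption} for the consistent part, a weighted Cauchy--Schwarz/Young argument with \eqref{eq:delta_k_upper} to absorb the stabilisation cross term $\sum_K\delta_K\int_K cw(\vec b\cdot\grad w)$, and for continuity an integration by parts transferring the derivative onto $v$ so the troublesome term $\int_\W w(\vec b\cdot\grad v)$ is handled elementwise with the $\delta_K^{\pm 1/2}$ weights, exactly as in the paper's proof. The only cosmetic difference is that the paper bounds $(c-\operatorname{div}\vec b)wv$ as a single term and collects all constants into $M=\max\{1,\Norm{c-\operatorname{div}\vec b}_{\leb{\infty}(\W)},\Norm{c}_{\leb{\infty}(\W)}\max_K\delta_K^{1/2}\}$, which does not change the substance.
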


\begin{proof}
  We first show coercivity. Using (\ref{eq:green_formula}) yields
  \begin{equation}
    \begin{split}
    a_h(w, w) 
    \geq 
    \inf_{x \in \W}\left(c - \frac 1 2 \grad \cdot \vec b\right)
    \Norm{w}^2_{\leb{2}(\W)}
    +
    \frac 1 2 \int_{\partial \W}(\vec b \cdot \vec n)\lvert w \rvert^2
    +
    \sum_{K\in \T}
    \Norm{\delta_K^{\frac 1 2}\vec b \cdot \grad w}^2_{\leb{2}(K)}\\
    -
    \left\lvert
      \sum_{K\in \T}\delta_K\int_K c w (\vec b \cdot \grad w)
    \right\rvert,
    \end{split}
  \end{equation}
  where only the final term on the right hand side remains to be estimated. To
  this end, we observe that, on any $K \in \T$,
  \begin{equation}
    \left\lvert\int_K \delta_K c w (\vec b \cdot \grad w)\right\rvert
    \leq 
    \delta_K^{\frac 1 2} 
    \Norm{c}_{\leb{\infty}(K)}
    \Norm{w}_{\leb{2}(K)}
    \Norm{\delta_K^{\frac 1 2} \vec b \cdot \grad w}_{\leb{2}(K)}.
  \end{equation}
  We now split this product using Young's inequality and note the
  choice of $\delta_K$, given in \eqref{eq:delta_k_upper} to see that
  \begin{equation}
    \begin{split}
    \left\lvert\int_K \delta_K c w (\vec b \cdot \grad w)\right\rvert
    &\leq 
    \frac 1 2 \delta_K \Norm{c}^2_{\leb{\infty}(K)}\Norm{w}^2_{\leb{2}(K)}
    +
    \frac 1 2 
    \Norm{\delta_K^{\frac 1 2} \vec b \cdot \grad w}^2_{\leb{2}(K)}\\
    &\leq \frac 1 2 \mu \Norm{w}^2_{\leb{2}(K)}
    +
    \frac 1 2 \Norm{\delta_K^{\frac 1 2} \vec b \cdot \grad w}^2_{\leb{2}(K)},
    \end{split}
  \end{equation}
  from which the result follows. To show continuity, we again use Green's
  formula and bound 
  \begin{equation}
    \begin{split}
      a_h(w, v) 
      =& 
      \int_{\W}\left(\vec b \cdot \grad w + c w\right)v 
      +
      \sum_{K\in \T}\delta_K
      \int_{K}\left(\vec b \cdot \grad w + c w\right)
      \vec b \cdot \grad v
      \\
      &= \int_{\W}\left(c -  \operatorname{div}\vec b\right)w v
      -
      \int_{\W}
      \left(\vec b \cdot \grad v\right)
       w
      +
      \int_{\partial \W}(\vec b \cdot \vec n)w v \\
      &+ 
      \sum_{K\in \T}
      \int_K\left(\delta^{\frac 1 2}_K \vec b \cdot \grad w\right)
      \left( \delta^{\frac 1 2}_K \vec b \cdot \grad v\right)
      +
      \sum_{K\in \T}\delta_K\int_K c w
      \left(  \vec b \cdot \grad v\right).
    \end{split}
  \end{equation}
  An application of Cauchy-Schwarz results in
  \begin{equation}\label{eq:continuity_1}
    \begin{split}
      \lvert a_h(w, v) \rvert
      &\leq 
      \Norm{c - \operatorname{div} \vec b}_{\leb{\infty}(\W)}
      \Norm{w}_{\leb{2}(\W)}\Norm{v}_{\leb{2}(\W)}
      +
      \sum_{K\in \T}
      \Norm{\delta^{\frac 1 2}_K \vec b \cdot \grad v}_{\leb{2}(K)}
      \Norm{\delta^{-\frac 1 2}_K w}_{\leb{2}(K)}\\
      &\qquad + 
      \lvert w \rvert_{\Gamma_+}
      \lvert v \rvert_{\Gamma_+}
      +
      \sum_{K\in \T}
      \Norm{\delta^{\frac 1 2}_K \vec b \cdot \grad w}_{\leb{2}(K)}
      \Norm{\delta^{\frac 1 2}_K \vec b \cdot \grad v}_{\leb{2}(K)}\\
      &\qquad +
      \sum_{K\in \T}\Norm{c}_{\leb{\infty}(K)}
      \delta^{\frac 1 2}_K
      \Norm{ w}_{\leb{2}(K)}
      \Norm{\delta^{\frac 1 2}_K \vec b \cdot \grad v}_{\leb{2}(K)}.
    \end{split}
  \end{equation}
  Finally, let $M = \max\left\{1, \Norm{c - \operatorname{div} \vec
    b}_{\leb{\infty}(\W)},\Norm{c}_{\leb{\infty}(\W)}
  \max_K\delta_K^{1 \slash 2}\right\}$. Then applying a discrete
  Cauchy-Schwarz inequality to \eqref{eq:continuity_1} gives
  \begin{equation}
    \lvert a_h(w, v) \rvert
    \leq 
    2\frac M \mu
    \tripleNorm{w}_* 
    \tripleNorm{v},
  \end{equation}
  as required.
\end{proof}

\begin{Theorem}[Best approximation]\label{thm:best_approx}
  Let $u \in \operatorname{H}_-(\W)$ be the solution of
  \eqref{eq:pde_variational_form} and let $u_h \in K_h$ be the
  solution of \eqref{eq:discrete_inequality}. Then we have the
  following best approximation result:
  \begin{equation*}
    \tripleNorm{u-u_h}
    \leq
    C\inf_{v_h \in K_h}\tripleNorm{u-v_h}_*.
  \end{equation*}
\end{Theorem}

\begin{proof}
  We begin our estimation noting that, by coercivity of $a_h$ with
  respect to the norm $\tripleNorm{\cdot}$,
  \begin{equation}\label{eq:linear_coercivity}
    \frac 1 2 \tripleNorm{u-u_h}^2 
    \leq 
    a_h(u - u_h, u - u_h).
  \end{equation}
  By consistency, shown in Lemma \ref{lem:consistency}, we have
  \begin{equation}\label{eq:linear_continuous}
    a_h(u, u_h - v_h) 
    =
    l_h(u_h - v_h),
  \end{equation}
  and since $u_h$ satisfies the discrete problem \eqref{eq:discrete_inequality}
  we have 
  \begin{equation}\label{eq:linear_discrete}
    a_h(u_h, u_h - v_h) 
    \leq
    l_h(u_h - v_h).
  \end{equation}
  Equations \eqref{eq:linear_continuous} and
  \eqref{eq:linear_discrete} together imply that 
  \begin{equation*}
    a_h(u - u_h, u_h - v_h) 
    \geq 
    0.
  \end{equation*}
  The inequality \eqref{eq:linear_coercivity} therefore leads to
  \begin{equation*}
    \frac 1 2 \tripleNorm{u-u_h}^2 
    \leq 
    a_h(u - u_h, u - v_h) 
    \leq 
    2\frac M \mu
    \tripleNorm{u - v_h}_*\tripleNorm{v_h - u_h},
  \end{equation*}
  where we have used the continuity result \eqref{eq:a_h_continuous},
  concluding the proof.
\end{proof}

\begin{remark}[Relation to Falk's estimate {\cite{falk1974error}}]
  In the context of finite element approximations for variational
  inequalities, the equality stated in Lemma \ref{lem:consistency}
  does not generally hold. Instead, approximation results typically
  involve additional terms arising from the projection of the exact
  solution onto a cone, as discussed in \cite[Theorem
    2.1]{brezzi1977error}. However, in our setting, the exact solution
  is not projected onto a cone, whereas the discretisation is. This
  distinction eliminates certain terms that would otherwise appear on
  the right-hand side of the analogous error bound for elliptic
  variational inequalities.
\end{remark}

\begin{corollary}[Rates of convergence]\label{cor:rates_1} Let $\vec b \in
  \sob{1}{\infty}(\W)$, $c \in \leb{\infty}(\W)$, and let $u$ be the
  unique solution of \eqref{eq:pde_variational_form}, with $u_h \in
  K_h$ the solution of \eqref{eq:discrete_inequality}. Let $k \geq 1$,
  and assume that $u \in \sobh{r}(\W)$, where $r > \tfrac d2$ is sufficiently
  large so that $u$ is regular enough to belong to the domain of the
  Lagrange interpolation operator. Suppose that $C_{\delta}>0$ is
  sufficiently small so that
    \begin{equation*}
      \delta_K
      :=
      C_{\delta}h_K 
      \leq 
      \frac{\mu}{\Norm{c}^2_{\leb{\infty}(K)}}.
    \end{equation*}    
    Then, there exists a constant $C > 0$ independent of $h$ such that
    \begin{equation}
      \tripleNorm{u - u_h}
      \leq 
      C h^{\min\left\{k+1, r\right\}-\frac 1 2} \vert u \rvert_{\sobh{r}(\W)}.
    \end{equation}
  \end{corollary}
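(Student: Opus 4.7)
The plan is to apply the best approximation bound from Theorem \ref{thm:best_approx} with a concrete choice of $v_h\in K_h$, namely the Lagrange interpolant $I_h u$, and then reduce the corollary to standard polynomial interpolation estimates on each element. The first step is to verify that $I_h u$ is actually an admissible competitor: since $r>d/2$ the Sobolev embedding gives $u\in C^0(\bar\W)$, so $I_h u$ is well-defined, and because $u(\vec x)\in[0,1]$ a.e.\ (hence everywhere by continuity), each nodal value $(I_h u)(\vec x_i)=u(\vec x_i)$ lies in $[0,1]$, which shows $I_h u\in K_h$. With this in hand, Theorem \ref{thm:best_approx} yields
\begin{equation*}
  \tripleNorm{u-u_h}
  \;\leq\;
  C\,\tripleNorm{u-I_h u}_*.
\end{equation*}

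Next I would bound each of the four contributions to $\tripleNorm{u-I_h u}_*$ using the standard local interpolation estimate
\begin{equation*}
  \Norm{u - I_h u}_{\sobh{m}(K)}
  \;\leq\;
  C\,h_K^{\min\{k+1,r\}-m}\,\lvert u\rvert_{\sobh{\min\{k+1,r\}}(K)}
\end{equation*}
for $m=0,1$, together with a scaled trace inequality on $\partial K$ for the outflow term. With $s:=\min\{k+1,r\}$ and the choice $\delta_K=C_\delta h_K$, I would estimate, element by element,
\begin{equation*}
  \mu\Norm{u-I_h u}^2_{\leb2(K)} \;\lesssim\; h_K^{2s}\lvert u\rvert^2_{\sobh{r}(K)},
\end{equation*}
\begin{equation*}
  \Norm{\delta_K^{1/2}\vec b\cdot\grad(u-I_h u)}^2_{\leb2(K)} \;\lesssim\; h_K\cdot h_K^{2s-2}\lvert u\rvert^2_{\sobh{r}(K)} \;=\; h_K^{2s-1}\lvert u\rvert^2_{\sobh{r}(K)},
\end{equation*}
\begin{equation*}
  \delta_K^{-1}\Norm{u-I_h u}^2_{\leb2(K)} \;\lesssim\; h_K^{-1}\cdot h_K^{2s}\lvert u\rvert^2_{\sobh{r}(K)} \;=\; h_K^{2s-1}\lvert u\rvert^2_{\sobh{r}(K)},
\end{equation*}
and use the scaled trace inequality $\Norm{w}^2_{\leb2(\partial K)}\lesssim h_K^{-1}\Norm{w}^2_{\leb2(K)}+h_K\Norm{\grad w}^2_{\leb2(K)}$ together with the boundedness of $\vec b$ to get $|u-I_h u|^2_{\Gamma_+}\lesssim h^{2s-1}\lvert u\rvert^2_{\sobh{r}(\W)}$.

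Summing over $K\in\T$ and taking square roots, all four terms deliver the same rate $h^{s-1/2}$ (the $\leb2$ term even gives a half-power better, but is dominated). Substituting back into the best-approximation bound yields the claimed estimate
\begin{equation*}
  \tripleNorm{u-u_h}\;\leq\;C\,h^{\min\{k+1,r\}-\frac12}\,\lvert u\rvert_{\sobh{r}(\W)}.
\end{equation*}
The only genuinely subtle point is recognising that it is the streamline-derivative term and the $\delta_K^{-1}$-weighted $\leb2$ term (together with the outflow trace) that set the overall rate at $h^{k+1/2}$ rather than the naive $h^{k+1}$; these are exactly the terms that the $\tripleNorm{\cdot}_*$ norm, and in particular the extra $\delta_K^{-1}$ piece inherited from the continuity estimate \eqref{eq:a_h_continuous}, is designed to track. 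The verification that $I_h u\in K_h$ (a nonstandard feasibility check introduced by the variational-inequality formulation) is unproblematic here because the exact solution itself lies in $[0,1]$; in lower-regularity regimes one would need to replace $I_h u$ by a bound-preserving quasi-interpolant, but that complication does not arise under the assumed hypotheses.
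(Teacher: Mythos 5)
Your proposal is correct and follows essentially the same route as the paper: verify that the Lagrange interpolant lies in $K_h$ (since $u$ takes values in $[0,1]$), insert it as the competitor in Theorem \ref{thm:best_approx}, and estimate each term of $\tripleNorm{\cdot}_*$ with standard interpolation bounds plus a scaled trace inequality, with the $\delta_K$-weighted and trace terms setting the rate $h^{\min\{k+1,r\}-1/2}$. Your explicit tracking of the exponent $\min\{k+1,r\}$ and of the Sobolev embedding guaranteeing the interpolant is well-defined are minor refinements of the same argument.
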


  \begin{proof}
    Since $u(x) \in [0,1]$ for all $x$, the piecewise polynomial
    Lagrange interpolant of $u$, $\mathcal{I}u$ onto the finite
    element space associated with any sufficiently regular
    triangulation of $\W$ also has $\mathcal{I}u(x_i) \in [0,1],
    i=1,...,N$. We therefore have $\mathcal{I}u \in K_h$ and therefore
    we can take $v_h = \mathcal{I}u$ in Theorem \ref{thm:best_approx}
    and apply standard interpolation estimates \cite{EG21-I} to the
    terms in the $\tripleNorm{\cdot}_*$ norm. With $C_{\mathcal{I}}$
    and $C_{\text{tr}}$ the constants in the interpolation and trace
    estimates respectively, one sees that
    \begin{equation}\label{eq:interp_first}
      \Norm{u-\mathcal{I}u}_{\leb{2}(\W)}
      \leq 
      C_{\mathcal{I}} h^{r} \lvert u \rvert_{\sobh{r}(\W)},
    \end{equation}
    \begin{equation}
      \Norm{\delta_K^{\frac 1 2}\vec b \cdot \grad 
      \left(u-\mathcal{I}u\right)}_{\leb{2}(K)}
      \leq 
      C_{\mathcal{I}} \Norm{\vec b}_{\leb{\infty}(K)} C_{\delta}^{\frac 1 2} 
      h^{r-\frac 1 2} \lvert u \rvert_{\sobh{r}(\W)},
    \end{equation}

    \begin{equation}
      \delta_K^{-\frac 1 2}\Norm{u - \mathcal{I}u}_{\leb{2}(K)}
      \leq 
      C_{\mathcal{I}} C_{\delta}^{-\frac 1 2} h^{r-\frac 1 2 }
       \lvert u \rvert_{\sobh{r}(\W)}.
    \end{equation}

    For the boundary norm, we appeal to a trace estimate, followed by estimates
    for the Lagrange interpolation error:
    \begin{equation}\label{eq:interp_last}
      \begin{split}
      \lvert u - \mathcal{I}u \rvert_{\Gamma^+\cap K}
      &\leq 
      C_{\text{tr}}\left(h_K^{-\frac 1 2}\Norm{u - \mathcal{I}u}_{\leb{2}(K)}
      + 
      h_K^{\frac 1 2}\Norm{\grad \left(u - \mathcal{I}u\right)}_{\leb{2}(K)}
      \right)\\
      &\leq 
      2 C_{\text{tr}} C_{\mathcal{I}} 
      h_K^{r-\frac 1 2}
      \lvert u \rvert_{\sobh{r}(\W)}.
      \end{split}
    \end{equation}
    After collecting Equations \eqref{eq:interp_first}-\eqref{eq:interp_last},
    the result follows.
  \end{proof}

\section{Extension to the case of nonlinear reaction}\label{sec:nonlinear}

We now consider the extension of the analysis of the previous sections
to the problem
\begin{equation}\label{eq:nonlinear_pde}
  \begin{split}
  \vec b \cdot \grad u + |u|^{p-2}u &= f \quad \text{in}\,\, \W\\
  u &= 0 \quad \text{on}\,\, \Gamma_-,
  \end{split}
\end{equation}
where $1 < p \leq 2$. The problem therefore has nonlinear reaction
which is singular as $u \to 0$, meaning that preservation of
positivity of solutions becomes very important.

Define (cf. Equation \eqref{eq:graph_space})
\begin{equation*}
  \operatorname{H}_{-,p}(\W)
  = 
  \{v \in \leb{2}(\W) : \vec b \cdot \grad v + |v|^{p-2}v \in \leb{2}(\W), \,\,
  (\vec b \cdot \vec n) v = 0 \,\,\text{on}\,\, \Gamma_-\}.
\end{equation*}
Then the weak form of problem \eqref{eq:nonlinear_pde} is to find $u \in
\operatorname{H}_{-,p}(\W)$ such that 
\begin{equation}\label{eq:nonlinear_weak_form}
  a(u, v)
  + 
  b(u;u,v)
  =
  l(v) \quad \forall v \in \leb{2}(\W),
\end{equation}
where $a(\cdot,\cdot)$ and $l(\cdot)$ are as defined in \eqref{eq:a_bilinear} and
\eqref{eq:l_linear} (the former with $c \equiv 0$), and where the semilinear form $b$ is
defined to be 
\begin{equation}\label{eq:semilinear_form}
  b(u;v,w)
  =
  \int_{\W}\lvert u \rvert^{p-2}vw \diff x.
\end{equation}

\begin{remark}[Existence and uniqueness]
The existence and uniqueness of solutions to this problem, in the
presence of diffusion, can be established using monotone operator
theory, as detailed in \cite[Chapter
  10]{renardy2006introduction}. While the proof techniques for the
corresponding linear problem do not straightforwardly generalise to
the nonlinear case, they can be adapted through fixed-point arguments.
\end{remark}

Classical error analysis in the $\leb{p}(\W)$ norm often leads to
suboptimal estimates for $p\neq 2$. To address this issue, quasi-norms
were introduced in \cite{barrett1993finite} as a tool for achieving
optimal convergence rates. We define the following quasi-norm, for
fixed $w \in \leb{p}(\W)$, we let
\begin{equation}
  \Norm{v}^2_{(w,p)}
  := 
  \int_{\W}
  \lvert v\rvert^2\left(\lvert v \rvert + \lvert w \rvert\right)^{p-2} \diff x,
\end{equation}
for all $v \in \leb{p}(\W)$. 

\subsection{Preliminary results}
Error analysis in quasi-norms requires technical lemmata which are stated here for
convenience. The key results are strong monotonicity and boundedness properties of the
form $b$ with respect to the quasi-norm, given in Lemma
\ref{lem:coercivity} and Lemma \ref{lem:quasi_boundedness} respectively.

\begin{lemma}(\cite[Lemma 2.1]{barrett1993finite}).
  Let $p >1$ and $x,y \in \reals$. Then there exist positive constants $C_1,
  C_2$ depending only upon $p$ such that 
  \begin{equation}\label{eq:technical_inequality_1}
    \left\lvert |x|^{p-2} x - \lvert y\rvert^{p-2} y\right\rvert
    \leq
    C_1
    \left(\left\lvert x\right\rvert + \left\lvert y \right\rvert\right)^{p-2}
    \left\lvert x - y\right\rvert,
  \end{equation}
  \begin{equation}
    \left(|x|^{p-2} x - \lvert y\rvert^{p-2} y\right)\left(x - y\right)
    \geq 
    C_2 
    \left(\lvert x\rvert + \lvert y \rvert\right)^{p-2}
    \left\lvert x - y\right\rvert^2,
  \end{equation}
\end{lemma}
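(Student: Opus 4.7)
The plan is to derive both inequalities from a single integral identity for the map $\phi(t) := |t|^{p-2}t$. For $p>1$ this function is continuous on $\reals$ and continuously differentiable away from the origin with $\phi'(t) = (p-1)|t|^{p-2}$; the singularity at $t=0$ (present only when $p<2$) is integrable since $p-2>-1$, so $\phi$ is absolutely continuous on every bounded interval. The fundamental theorem of calculus applied along the straight segment joining $y$ to $x$ then gives
\begin{equation*}
  \phi(x) - \phi(y)
  =
  (p-1)(x-y)\int_0^1 |y + s(x-y)|^{p-2}\,ds.
\end{equation*}

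With this identity in hand, both inequalities reduce to a single two-sided comparison
\begin{equation*}
  c_p\bigl(|x|+|y|\bigr)^{p-2}
  \,\leq\,
  \int_0^1 |y + s(x-y)|^{p-2}\,ds
  \,\leq\,
  C_p\bigl(|x|+|y|\bigr)^{p-2},
\end{equation*}
with positive constants $c_p, C_p$ depending only on $p$. Taking absolute values in the identity and applying the upper bound yields the first inequality \eqref{eq:technical_inequality_1}; multiplying the identity by $(x-y)$ and applying the lower bound, together with $(x-y)^2 = |x-y|^2$, yields the second.

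The comparison itself is where the real work lies and is the main obstacle, since the two regimes $p\geq 2$ and $1 < p < 2$ behave oppositely. For $p\geq 2$ the integrand is bounded pointwise by $(|x|+|y|)^{p-2}$, giving the upper estimate immediately, whereas the lower estimate requires that the segment from $y$ to $x$ cannot hug the origin too closely. For $1 < p < 2$ the pointwise inequalities reverse and the upper bound becomes the delicate direction, since the integrand blows up wherever $y+s(x-y)=0$. I would handle both cases uniformly by a two-region split: assuming without loss of generality $|x|\geq |y|$, treat separately the sub-case $|y|\geq |x|/2$ (endpoints of comparable magnitude, so $|y+s(x-y)|$ stays comparable to $|x|+|y|$ on a set of $s$ of measure bounded away from zero) and the sub-case $|y|<|x|/2$ (where $|x-y|$ is comparable to $|x|+|y|$ and the substitution $u = y + s(x-y)$, $du = (x-y)\,ds$, reduces the integral to $\tfrac{1}{x-y}\int_y^x |u|^{p-2}\,du$, which evaluates explicitly using the antiderivative $\tfrac{1}{p-1}|u|^{p-2}u$). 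Combining the resulting estimates across the two sub-cases delivers both directions of the comparison with constants depending only on $p$, completing the proof.
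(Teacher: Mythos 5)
The paper does not actually prove this lemma: it is quoted verbatim from Barrett and Liu (\cite[Lemma 2.1]{barrett1993finite}) and used as a black box, so there is no in-paper argument to match yours against. Your proposal is a genuine, essentially correct self-contained proof, and it follows the standard route for such $p$-structure inequalities: the identity
\begin{equation*}
  |x|^{p-2}x - |y|^{p-2}y
  =
  (p-1)(x-y)\int_0^1 \bigl\lvert y + s(x-y)\bigr\rvert^{p-2}\,\mathrm{d}s,
\end{equation*}
valid since $|t|^{p-2}t$ is absolutely continuous for $p>1$ (the singularity of the derivative at the origin being integrable when $p<2$), reduces both estimates to the two-sided comparison of the line integral with $(|x|+|y|)^{p-2}$, and that comparison is true with constants depending only on $p$. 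Your case analysis is the right shape, with one soft spot: in the sub-case $|y|\geq |x|/2$ your stated justification (the integrand is comparable to $(|x|+|y|)^{p-2}$ on a set of $s$ of measure bounded below) only delivers the \emph{lower} bound, which is the easy direction there (indeed for $1<p<2$ the lower bound is pointwise, since $|y+s(x-y)|\leq|x|+|y|$ and the exponent is negative); it does not by itself control the \emph{upper} bound for $1<p<2$ when the endpoints have comparable magnitude but opposite signs, since the integrand then blows up at the zero crossing. That case is, however, covered by exactly the substitution you deploy in the other sub-case: when the signs differ one has $|x-y|=|x|+|y|$, and $\frac{1}{|x-y|}\int_{y}^{x}|u|^{p-2}\,\mathrm{d}u \leq \frac{|x|^{p-1}+|y|^{p-1}}{(p-1)(|x|+|y|)} \leq C_p(|x|+|y|)^{p-2}$, so organising the split by sign rather than by magnitude closes the argument cleanly. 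An alternative classical shortcut, worth knowing, is to use homogeneity of degree $p-1$ in $(x,y)$ to reduce to the compact set $|x|+|y|=1$ and argue by continuity; your integral-representation proof is more explicit and gives computable constants, which is what Barrett--Liu-type arguments provide.
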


\begin{lemma}\label{lem:coercivity}
  There exists $C_C >0$ such that, for all $u,v\in \leb{p}(\W)$,
  \begin{equation}\label{eq:coercivity}
    b(u;u, u-v) - b(v; v, u-v)
    \geq 
    C_C
    \Norm{u-v}^2_{(u,p)}.
  \end{equation}
\end{lemma}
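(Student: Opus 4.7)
The plan is to unfold the definition of $b$ so that the difference telescopes into a single integral against $(u-v)$, then apply the pointwise monotonicity estimate from the preceding technical lemma, and finally convert the resulting weight $(|u|+|v|)^{p-2}$ into the quasi-norm weight $(|u-v|+|u|)^{p-2}$ using the triangle inequality together with the fact that $p-2 \le 0$.

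Concretely, by the definition \eqref{eq:semilinear_form},
\begin{equation*}
  b(u;u,u-v) - b(v;v,u-v)
  =
  \int_{\W}\bigl(|u|^{p-2}u - |v|^{p-2}v\bigr)(u-v)\diff x.
\end{equation*}
The second inequality in the preceding lemma, applied pointwise a.e.\ with $x=u(\vec x)$ and $y=v(\vec x)$, bounds the integrand below by $C_2(|u|+|v|)^{p-2}|u-v|^2$, so
\begin{equation*}
  b(u;u,u-v) - b(v;v,u-v)
  \geq
  C_2 \int_{\W}(|u|+|v|)^{p-2}|u-v|^2\diff x.
\end{equation*}

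The remaining step is to replace $(|u|+|v|)^{p-2}$ by $(|u-v|+|u|)^{p-2}$. This is where one has to use the sign of $p-2$: from $|v| \le |u| + |u-v|$ we get $|u|+|v| \le 2|u|+|u-v| \le 2(|u|+|u-v|)$, and since $p-2 \le 0$ the map $t \mapsto t^{p-2}$ is nonincreasing on $(0,\infty)$, so
\begin{equation*}
  (|u|+|v|)^{p-2} \geq 2^{p-2}(|u|+|u-v|)^{p-2}
\end{equation*}
almost everywhere (with the standard convention that both sides equal $+\infty$ on the null set where $u=v=0$, where the integrand $|u-v|^2$ vanishes anyway). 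Combining this with the previous bound and setting $C_C := 2^{p-2} C_2$ yields the claimed inequality.

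The only mild subtlety is the singularity of $t^{p-2}$ at $t=0$: one should either restrict the above pointwise manipulation to the set $\{|u|+|v|>0\}$ (on the complement, $u=v$ and there is nothing to prove) or, as is standard, work with the regularised weight $(\varepsilon+|u|+|v|)^{p-2}$ and let $\varepsilon\to 0$ by monotone convergence. Apart from this bookkeeping, the argument is essentially a one-line consequence of the pointwise monotonicity estimate, so the only substantive choice is the triangle-inequality route used to pass from the symmetric weight $(|u|+|v|)^{p-2}$ to the quasi-norm weight $(|u-v|+|u|)^{p-2}$.
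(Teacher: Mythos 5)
Your proposal is correct and follows essentially the same route as the paper: unfold $b$ into a single integral, apply the pointwise monotonicity bound from the technical lemma, and convert the weight via $|u|+|v|\leq 2\left(|u|+|u-v|\right)$ together with the fact that $t\mapsto t^{p-2}$ is nonincreasing, arriving at the same constant $C_C = 2^{p-2}C_2$. Your extra remark on handling the singular set where $u=v=0$ is a harmless refinement the paper leaves implicit.
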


\begin{proof}
  We first note that 
  \begin{equation}
    \lvert x \rvert + \lvert y \rvert
    =
    \lvert x \rvert + \lvert y - x + x \rvert
    \leq 
    2 \lvert x \rvert + \lvert x-y \rvert,
  \end{equation}
  We therefore we have, noting that $p < 2$,
  \begin{equation}
    \begin{split}
    \left(|x|^{p-2} x - \lvert y\rvert^{p-2} y\right)\left(x - y\right)
    &\geq
    C_2\left(\lvert x \rvert + \lvert y \rvert\right)^{p-2}
    \left\lvert x - y\right\rvert^2\\
    &\geq 
    2^{p-2}C_2 \left(\lvert x \rvert + \lvert x-y \rvert\right)^{p-2}
    \left\lvert x - y\right\rvert^2,
    \end{split}
  \end{equation}
  and so the result follows with $C_C = 2^{p-2}C_2$.
\end{proof}

\begin{lemma}\label{lem:quasi_boundedness} For $1 < p < 2$, there exists $C_B >0$ such
  that for any $\theta \in (0, 1]$ and any $u,v,w \in \leb{p}(\W)$,
  \begin{equation}
    \lvert b(u;u, w) - b(v; v, w) \rvert
    \leq 
    C_B
    \left(
    \theta \Norm{u-v}^2_{(u,p)}
    +
    \theta^{-1} \Norm{w}^2_{(u,p)}
    \right).
  \end{equation}
\end{lemma}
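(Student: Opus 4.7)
The starting point is the pointwise inequality \eqref{eq:technical_inequality_1}, which gives
\begin{equation*}
\lvert b(u;u,w) - b(v;v,w) \rvert \leq C_1 \int_{\W} (\lvert u \rvert + \lvert v \rvert)^{p-2} \lvert u - v \rvert \lvert w \rvert \diff x.
\end{equation*}
The natural move is Young's inequality on $\lvert u - v \rvert \lvert w \rvert$, but the weight $(\lvert u \rvert + \lvert v \rvert)^{p-2}$ cannot be controlled pointwise by a multiple of $(\lvert u \rvert + \lvert w \rvert)^{p-2}$---for instance when $\lvert v \rvert$ is small and $\lvert w \rvert$ is large the former blows up while the latter does not---so a single Young step applied uniformly in $x$ will not succeed on the whole domain.

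My plan is therefore to split $\W$ into $\W_A := \{ x \in \W : 2(\lvert u \rvert + \lvert v \rvert) \geq \lvert u \rvert + \lvert w \rvert \}$ and $\W_B := \W \setminus \W_A$ and handle them separately. On $\W_A$, the defining inequality combined with $p - 2 < 0$ yields $(\lvert u \rvert + \lvert v \rvert)^{p-2} \leq 2^{2-p}(\lvert u \rvert + \lvert w \rvert)^{p-2}$, while the elementary estimate $\lvert u \rvert + \lvert v \rvert \geq \max\{\lvert u \rvert, \lvert u - v \rvert\} \geq \tfrac{1}{2}(\lvert u \rvert + \lvert u - v \rvert)$ gives globally $(\lvert u \rvert + \lvert v \rvert)^{p-2} \leq 2^{2-p}(\lvert u \rvert + \lvert u - v \rvert)^{p-2}$. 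Applying Young's inequality $\lvert u - v \rvert \lvert w \rvert \leq \tfrac{\theta}{2} \lvert u - v \rvert^2 + \tfrac{1}{2\theta}\lvert w \rvert^2$ and integrating then bounds the $\W_A$ contribution by a constant multiple of $\theta \Norm{u-v}^2_{(u,p)} + \theta^{-1} \Norm{w}^2_{(u,p)}$.

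The main obstacle is $\W_B$, where the weight $(\lvert u \rvert + \lvert v \rvert)^{p-2}$ is uncontrollably large relative to $(\lvert u \rvert + \lvert w \rvert)^{p-2}$ and the Young split breaks. Here I will abandon Young's entirely: the defining condition of $\W_B$ rearranges to $\lvert u \rvert + 2\lvert v \rvert < \lvert w \rvert$, so $\lvert u \rvert + \lvert v \rvert < \lvert w \rvert$, and using $\lvert u - v \rvert \leq \lvert u \rvert + \lvert v \rvert$ together with $p - 1 > 0$,
\begin{equation*}
(\lvert u \rvert + \lvert v \rvert)^{p-2} \lvert u - v \rvert \lvert w \rvert
\leq (\lvert u \rvert + \lvert v \rvert)^{p-1} \lvert w \rvert
\leq \lvert w \rvert^p.
\end{equation*}
Since also $\lvert u \rvert < \lvert w \rvert$ on $\W_B$, we have $\lvert u \rvert + \lvert w \rvert \leq 2 \lvert w \rvert$, hence $\lvert w \rvert^p \leq 2^{2-p}(\lvert u \rvert + \lvert w \rvert)^{p-2}\lvert w \rvert^2$; integrating yields the $\W_B$ contribution bounded by a constant multiple of $\Norm{w}^2_{(u,p)}$. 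The assumption $\theta \in (0,1]$ gives $\theta^{-1} \geq 1$, so this bare $\Norm{w}^2_{(u,p)}$ term absorbs into the $\theta^{-1}\Norm{w}^2_{(u,p)}$ target, completing the argument. The delicate point is recognising that a uniform Young splitting cannot work and that exploiting $p - 1 > 0$ on $\W_B$---bounding $(\lvert u \rvert + \lvert v \rvert)^{p-1}\lvert w \rvert$ directly by a power of $\lvert w \rvert$ rather than separating $\lvert u - v \rvert$ from $\lvert w \rvert$---is what rescues the estimate on the bad set.
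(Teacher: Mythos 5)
Your proof is correct, and at the top level it follows the same route as the paper: both start by integrating the pointwise bound \eqref{eq:technical_inequality_1}. The difference lies in how the weighted Young-type split is handled. The paper converts the weight via $|u|+|v| \geq \tfrac12\left(|u|+|u-v|\right)$ and then invokes Lemma 2.2 of \cite{liu2000finite} (with $a=|x|$, $\sigma_1=|x-y|$, $\sigma_2=|z|$, $r=p$) as a black box; that cited lemma is exactly the statement that the mismatched weights $(|u|+|u-v|)^{p-2}$ and $(|u|+|w|)^{p-2}$ can be reconciled at the cost of the factors $\theta$ and $\theta^{-1}$. What you have done is re-derive the content of that lemma from scratch: on your set $\W_A$ the two weights are comparable and plain Young suffices, while on $\W_B$ you exploit $p-1>0$ to bound $(|u|+|v|)^{p-1}|w| \leq |w|^p \leq 2^{2-p}(|u|+|w|)^{p-2}|w|^2$ and then absorb this into the $\theta^{-1}$ term using $\theta \leq 1$. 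This is precisely the case analysis hiding inside Liu's lemma, so your argument buys self-containedness and makes transparent why the hypothesis $\theta \in (0,1]$ is needed, at the cost of a page of elementary estimates the paper dispatches by citation; your observation that a uniform pointwise Young step cannot work is also the correct diagnosis of why such a lemma is required. (Both your argument and the paper's implicitly use the convention that the integrand is read as zero on the set where $u=v=0$, which is harmless since the original difference $|u|^{p-2}u-|v|^{p-2}v$ vanishes there.)
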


\begin{proof}
  For real numbers $x, y$ and $z$, we apply \eqref{eq:technical_inequality_1} to
  see that 
  \begin{equation}
    \begin{split}
    \left\lvert |x|^{p-2} x - \lvert y\rvert^{p-2} y\right\rvert |w|
    &\leq
    C_1
    \left(\left\lvert x\right\rvert + \left\lvert y \right\rvert\right)^{p-2}
    \left\lvert x - y\right\rvert|w|\\
    &\leq
    C'_1
    \left(\left\lvert x\right\rvert + \left\lvert x-y \right\rvert\right)^{p-2}
    \left\lvert x - y\right\rvert|w|,
    \end{split}
  \end{equation}
  which can be further bounded by invoking \cite[Lemma 2.2]{liu2000finite} with
  $a = |x|, \sigma_1=|x-y|, \sigma_2=|z|,\alpha=1$ and $r = p$. This gives the
  bound
  \begin{equation}
    \begin{split}
    \left\lvert |x|^{p-2} x - \lvert y\rvert^{p-2} y\right\rvert \lvert z\rvert 
    &\leq
    C_B \left(
      \theta \left(|x| + |x-y|\right)^{p-2}|x-y|^2
      +
      \theta^{-1}\left(|x| + |z|\right)^{p-2}|z|^2\right),
    \end{split}
  \end{equation}
  from which the desired result is immediate.
\end{proof}

\subsection{Finite element discretisation}

We present the finite element method for the nonlinear problem as an extension of the
linear case by treating the advection with SUPG-stabilisation. 
The discrete problem is then to find $u_h \in
K_h$ such that 
\begin{equation}\label{eq:nonlinear_fem}
  a_h(u_h, v_h - u_h) + b(u_h; u_h, v_h - u_h) 
  \geq 
  l_h(v_h - u_h) \quad \forall v_h \in K_h,
\end{equation}
where $a_h$ is as defined in Equation \eqref{eq:supg_bilinear_form} with $c
\equiv 0$ and $b$ is the semilinear form defined in Equation
\eqref{eq:semilinear_form}.

\begin{remark}[Weaker control of the advective term]\label{rem:no_l2}
  In the following analysis, we prove a best approximation result by
  treating the advective part in an analogous way to Theorem
  \ref{thm:best_approx}, and the nonlinear part separately with the
  quasi-norm $\Norm{\cdot}_{(u,p)}$. Since we are then working with
  the SUPG bilinear form $a_h$ with the reaction coefficient $c$ set
  to be identically zero, we can no longer assume the coercivity
  condition \eqref{eq:coercivity_assumption}. As a result, the
  coercivity shown in Lemma \ref{lem:cont_coerc} is replaced with the
  weaker notion
  \begin{equation}\label{eq:weaker_norm}
    a_h(v_h, v_h) 
    =
    \lvert v_h\rvert^2_{\Gamma_+} 
    + 
    \sum_{K \in \T}\Norm{\delta_K^{\frac 1 2}\vec b \cdot \grad v_h}^2_{\leb{2}(K)}.
  \end{equation}
  We therefore note that control of the error in $\leb{2}(\W)$ is
  lost, or rather replaced, with error control in
  $\Norm{\cdot}_{(u,p)}$.

  The weak norm given on the right hand side of Equation
  \eqref{eq:weaker_norm} is the norm $\tripleNorm{\cdot}$
  (cf. Equation \eqref{eq:triplenorm}) with $\mu$ set to zero.  For
  the remainder of this section we will reuse $\tripleNorm{\cdot}$ to
  denote this weaker norm, and adopt a similar convention for
  $\tripleNorm{\cdot}_*$.
\end{remark}

\begin{lemma}[Quantification of inconsistency (cf. Lemma \ref{lem:consistency})]
  \label{lem:inconsistency}
  Let $u$ be the solution to problem \ref{eq:nonlinear_weak_form}. Then for any $w_h \in
  \fes$,
  \begin{equation}\label{eq:consistency_nonlinear}
    l_h(w_h) 
    -
    a_h(u, w_h) 
    -
    b(u;u,w_h)
    =
    \sum_{K \in \T} \delta_K 
    \int_K|u|^{p-2}u\left(\vec b \cdot \grad w_h\right).
  \end{equation}
\end{lemma}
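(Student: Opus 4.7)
The plan is to expand both sides of the identity using the definitions of $a_h$, $l_h$ and $b$, then substitute the strong form of the PDE \eqref{eq:nonlinear_pde} which the exact solution $u$ satisfies pointwise. Since the exact solution need not lie in $\fes$, we cannot expect full Galerkin orthogonality; the mismatch will come entirely from the stabilisation term, because SUPG stabilisation is only fully consistent when the reaction contribution is linear (and hence already representable through the bilinear form $a_h$).

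First, I would write out, with $c \equiv 0$,
\begin{equation*}
  a_h(u, w_h)
  =
  \int_{\W}(\vec b \cdot \grad u)\, w_h
  +
  \sum_{K \in \T} \delta_K \int_K (\vec b \cdot \grad u)(\vec b \cdot \grad w_h),
\end{equation*}
and
\begin{equation*}
  l_h(w_h)
  =
  \int_{\W} f w_h + \sum_{K\in\T}\delta_K \int_K f\,(\vec b \cdot \grad w_h),
  \qquad
  b(u;u,w_h) = \int_{\W} |u|^{p-2} u\, w_h.
\end{equation*}
Subtracting, the un-stabilised terms give $\int_\W (f - \vec b \cdot \grad u - |u|^{p-2}u) w_h$, which vanishes by \eqref{eq:nonlinear_pde}. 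The stabilised terms collapse to $\sum_K \delta_K \int_K (f - \vec b \cdot \grad u)(\vec b \cdot \grad w_h)$, and substituting $f - \vec b \cdot \grad u = |u|^{p-2} u$ (again by the PDE, which holds almost everywhere in each $K$) yields exactly the claimed right-hand side.

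There is no real obstacle here: the argument is a direct algebraic manipulation relying only on the pointwise validity of \eqref{eq:nonlinear_pde} and the definitions of the discrete forms. The only care needed is to note that, because $\vec b \cdot \grad u + |u|^{p-2}u \in \leb{2}(\W)$ by the definition of $\operatorname{H}_{-,p}(\W)$, each of the integrals in the expansion is well defined, so the substitution using the strong form is legitimate element by element. This makes the lemma purely a bookkeeping statement that isolates the SUPG inconsistency into a single residual term, which will later need to be absorbed in the best-approximation estimate via the quasi-norm machinery of Lemma \ref{lem:coercivity} and Lemma \ref{lem:quasi_boundedness}.
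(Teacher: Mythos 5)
Your proposal is correct and follows essentially the same route as the paper: expand $l_h(w_h)-a_h(u,w_h)-b(u;u,w_h)$ into the unstabilised residual term, which vanishes since $u$ solves \eqref{eq:nonlinear_pde} (equivalently, since \eqref{eq:nonlinear_weak_form} holds for all $v\in\leb{2}(\W)$), and the SUPG terms, where substituting $f-\vec b\cdot\grad u=|u|^{p-2}u$ gives the stated right-hand side. Your remark on the $\leb{2}$ well-definedness of the residual is a fine, if minor, addition to what the paper leaves implicit.
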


\begin{proof}
  The result follows after evaluating the left hand side of
  \autoref{eq:consistency_nonlinear} to obtain
  \begin{equation}
    \int_{\W}\left(f - \vec b \cdot \grad u - \lvert u \rvert^{p-2}u\right)w_h 
    + 
    \sum_{K \in \T}\delta_K \int_K \left(f - \vec b \cdot \grad u \right)
    \left(\vec b \cdot \grad w_h\right),
  \end{equation}
  and using the fact that $u$ is the solution of \autoref{eq:nonlinear_pde}.
\end{proof}

\begin{Theorem}[Best approximation]
  Assume that $\operatorname{div}\vec b = 0$, that $u$ is the solution of
  \eqref{eq:nonlinear_weak_form}, and that $u_h$ is the finite element solution, that
  is, the solution of \eqref{eq:nonlinear_fem}. Then
  \begin{multline}
    \tripleNorm{u-u_h}^2
    +
    \Norm{u-u_h}^2_{(u,p)}
    \leq
    C \inf_{v_h \in K_h}
    \left(\Norm{u-v_h}^2_{(u,p)} + \tripleNorm{u-v_h}^2_*\right)
    \\
    +
    \sup_{0\neq w_h\in \fes}
    \frac{\sum_{K \in \T} \delta_K 
      \int_K|u|^{p-2}u\left(\vec b \cdot \grad w_h\right)}
         {\tripleNorm{w_h}}.
  \end{multline}      
\end{Theorem}

\begin{proof}
  From the coercivity result
  \eqref{eq:coercivity}, and in light of Equation \eqref{eq:weaker_norm}, we immediately
  have
  \begin{equation}\label{eq:nonlin_first}
    \begin{split}
    \tripleNorm{u - u_h}^2
    +
    C_C \Norm{u-u_h}^2_{(u,p)}
    \leq 
    a_h(u-u_h,u-u_h)
    +
    b(u;u, u-u_h) - b(u_h;u_h, u-u_h).
    \end{split}
  \end{equation}
  We now wish to introduce an arbitrary $v_h$. To this end, invoking
  Lemma \ref{lem:inconsistency} with $w_h = u_h - v_h$, and using the
  fact that $u_h$ satisfies Equation \eqref{eq:nonlinear_fem}, we
  conclude that
  \begin{equation}
    \begin{split}
    a_h(u-u_h, u_h - v_h) 
    + 
    b(u; \,&u, u_h - v_h) 
    - 
    b(u_h; u_h, u_h - v_h)\\
    &+
    \sum_{K\in\T}\delta_K \int_K \lvert u \rvert^{p-2}u 
    \left(\vec b \cdot \grad (u_h - v_h)\right) \geq 0.
    \end{split}
  \end{equation}  
  Then (cf. Theorem \ref{thm:best_approx}) we can write 
  \begin{equation}
    \begin{split}
    a_h(u - u_h,u-u_h) + b(u;u,& \,u-u_h)- b(u_h;u_h,u-u_h)\\
    \leq 
    a_h(u - u_h, \,&u-v_h) + b(u;u,u-v_h)- b(u_h;u_h,u-v_h)\\
    +
    &\sum_{K\in\T}\delta_K \int_K \lvert u \rvert^{p-2}u 
    \left(\vec b \cdot \grad (u_h - v_h)\right).
    \end{split}
  \end{equation}
  Using Lemma \ref{lem:quasi_boundedness}
  \begin{equation}
    b(u;u,u-v_h) - b(u_h;u_h,u-v_h)
    \leq 
    C_B\left(
    \theta \Norm{u-u_h}^2_{(u,p)}
    +
    \theta^{-1} \Norm{u-v_h}^2_{(u,p)}
    \right).
  \end{equation}
  We apply a boundedness argument analogous to Lemma
  \ref{lem:cont_coerc} to see
  \begin{equation}\label{eq:nonlin_last}
    a_h(u - u_h, u-v_h) 
    \leq 
    \tripleNorm{u-v_h}_* \tripleNorm{u-u_h}
    \leq 
    \frac 1 2\tripleNorm{u-v_h}_*^2
    +
    \frac 1 2\tripleNorm{u-u_h}^2.
  \end{equation}
  Upon making the choice of $\theta = \min\left(\frac{C_C}{2 C_B}, \frac 1 2 \right)$,
  collecting Equations \eqref{eq:nonlin_first}-\eqref{eq:nonlin_last}, we have
  \begin{equation}
    \tripleNorm{u-u_h}^2
    +
    \Norm{u-u_h}^2_{(u,p)}
    \leq
    \tripleNorm{u-v_h}^2_*
    + 
    \Theta \Norm{u-v_h}^2_{(u,p)}
    +
    \sum_{K\in\T}\delta_K \int_K \lvert u \rvert^{p-2}u 
    \left(\vec b \cdot \grad (u_h - v_h)\right),
  \end{equation}
  where $\Theta = \max \left(2 \frac{C_B}{C_C},2
  \frac{C^2_B}{C^2_C}\right)$, concluding the proof.
\end{proof}

\section{Overview of active-set techniques and applications}
\label{sec:iterative_methods}

The variational inequality \eqref{eq:discrete_inequality}, posed over
the finite-dimensional space $K_h$, is nonlinear and must be solved
using an iterative method. In this section, we describe solution
techniques for this class of problems. Throughout, we denote a vector
$\vec{\xi} \in \reals^N$ as non-negative, written $\vec{\xi} \geq 0$,
if all its components satisfy $(\vec{\xi})_i \geq 0$.

The problem \eqref{eq:discrete_inequality} can be viewed as a specific
instance of the general variational inequality
\begin{equation} 
  \ltwop{G(u)}{v - u} \geq 0 \quad \forall v \in K,
\end{equation}
where $G: V \to V$ is a mapping on a Banach space $V$ and $K$ is a
closed and convex subset of $V$. In the discrete setting, we introduce
a discrete operator $\bar{G}_h : \fes \to \fes$ and reformulate
\eqref{eq:discrete_inequality} as: find $u_h \in K_h$ such that:
\begin{equation} 
  \ltwop{\bar{G}_h(u_h)}{v_h - u_h} \geq 0 \quad \forall v_h \in K_h.
\end{equation}
An equivalent reformulation involves a mapping $G_h : \reals^N \to
\reals^N$, where $N$ denotes the dimension of the finite element
space. This mapping is defined using the finite element system matrix
and right-hand side. Specifically, let $\vec{A}_h$ represent the
assembled system matrix such that
\begin{equation}
  \left(\vec{A}_h\right)_{ij} = a_h(\varphi_j, \varphi_i),
\end{equation}
with $\{\varphi_i\}_{i=1}^N$ the nodal basis functions. Similarly, let
\begin{equation}
  \left(\vec{F}_h\right)_i = l_h(\varphi_i).
\end{equation}
If $\vec{V} \in \reals^N$ is the vector of degrees of freedom
corresponding to a finite element function $v_h$, given by:
\begin{equation}
  v_h(\vec{x}) = \sum_{i=1}^N \vec{V}_i \varphi_i(\vec{x}),
\end{equation}
then the mapping $G_h : \mathbb{R}^N \to \mathbb{R}^N$ is given by:
\begin{equation}
  G_h(\vec{V}) = \vec{A}_h \vec{V} - \vec{F}_h.
\end{equation}
We also define the discrete set
\begin{equation}
  K_{h, N} := \{\vec{V} \in \mathbb{R}^N : 0 \leq \vec{V}_i \leq 1, \, i = 1, \dots, N\}.
\end{equation}
The discrete variational inequality \eqref{eq:discrete_inequality} can
now be reformulated algebraically as: Find \(\vec{U} \in K_{h, N}\)
such that
\begin{equation}
  G_h(\vec{U}) \cdot (\vec{V} - \vec{U}) \geq 0 \quad \forall \vec{V} \in K_{h, N}.
\end{equation}

\begin{definition}[Projection Operators]
Let $P : \fes \to K_h$ denote the orthogonal projection onto the
closed convex subset $K_h$. Assuming $K_h$ is a box as defined in
\eqref{eq:definition_of_K_h}, the projection $P$ has a simple
explicit form. For $v_h \in K_h$, we define:
\begin{equation}\label{eq:proj}
  P(v_h) := \sum_{i=1}^N \max\left\{0, \min\left\{v_h(\vec{x}_i), 1\right\}\right\} \varphi_i.
\end{equation} 
Similarly, we define the projection in the Euclidean setting, $\Pi :
\mathbb{R}^N \to K_{h, N}$, as:
\begin{equation}\label{eq:proj_Rn}
  \Pi(\vec{X})_i := \max\left\{0, \min\left\{\vec{X}_i, 1\right\}\right\}.
\end{equation}
\end{definition}

\subsection{Projection Methods}

For discretisations of many elliptic variational inequalities, a
common approach combines iterative schemes, such as Richardson
iteration or successive over-relaxation (SOR), with a projection
step. These methods ensure that the iterates remain within the convex
set and are relatively straightforward to implement. The underlying
principle for such methods is that the discrete problem can often be
reformulated as a minimisation problem, allowing the application of
various well-established optimisation algorithms.

However, when the finite element system matrix is not symmetric, as is
the case in this problem, these classical methods generally fail to
converge. This limitation necessitates the use of projection methods
that are robust under broader assumptions. One such method, introduced
in \cite{alfredo1994iterative}, iteratively projects the solution back
onto the feasible set and is defined as follows. Given an initial
guess $u_h^0 \in K_h$, the method updates the iterate using:
\begin{equation}
  u_h^{j+1} = P\left(u_h^j - \gamma \bar{G}_h(u_h^j)\right),
\end{equation}
where \(\gamma > 0\) is a step size parameter. Iteration is terminated
when the stopping criterion \(\Norm{u_h^{j+1} - u_h^j} <
\texttt{TOL}\) is satisfied, for a given tolerance \(\texttt{TOL}\).

The projection method, as detailed in \cite{alfredo1994iterative},
converges to a solution under the assumptions that the operator is
monotone and Lipschitz continuous, without requiring
differentiability. While this method is simple and guarantees
convergence, it often requires many iterations to achieve acceptable
accuracy. Consequently, for large-scale problems or those demanding
high efficiency, more advanced and scalable methods are necessary to
enhance convergence rates and reduce computational costs.

\subsection{Reduced-Space Active Set Method}

The numerical results presented in this work were obtained using the
finite element discretisation package Firedrake
\cite{FiredrakeUserManual}, which leverages PETSc
\cite{balay1998petsc} for solving the variational inequalities
introduced in Sections~\ref{sec:problem_setup} and
\ref{sec:nonlinear}. Among the various solvers available in PETSc, we
employed the reduced-space active-set method
\cite{benson2006flexible}. This method is particularly well-suited for
parallel implementations and guarantees that the computed solution
satisfies the imposed constraints. Although no formal convergence
proofs were provided in \cite{benson2006flexible}, the method has
demonstrated efficiency and robustness in numerous test cases,
especially in applications involving monotone operators, which aligns
with the scope of this work.

Let $\vec{X} \in \reals^N$. Define the active and inactive sets
associated with $\vec{X}$ as follows:
\begin{align}
  A_0(\vec{X}) &:= \left\{ i : \vec{X}_i = 0 \text{ and } G_h(\vec{X})_i > 0 \right\}, \\
  A_1(\vec{X}) &:= \left\{ i : \vec{X}_i = 1 \text{ and } G_h(\vec{X})_i \leq 0 \right\}, \\
  A(\vec{X}) &:= A_0(\vec{X}) \cup A_1(\vec{X}), \\
  I(\vec{X}) &:= \{1, 2, \dots, N\} \setminus A(\vec{X}),
\end{align}
where $G_h(\vec{X})_i$ denotes the $i^{\text{th}}$ component of the
nonlinear residual evaluated at $\vec{X}$. The set $A(\vec{X})$
contains the indices of degrees of freedom where the box constraints
are active, while $I(\vec{X})$ denotes the inactive set.

In the reduced-space active set method, starting from an initial guess
$\vec{X}^0$, the degrees of freedom corresponding to $A(\vec{X}^j)$
are fixed at each iteration. A linear system, derived from the
linearisation of $G_h$, is solved over the inactive set $I(\vec{X}^j)$
to compute an update $\delta \vec{X}^j$, analogous to Newton's
method. Specifically, let $G_h^I(\vec{X}^j)$ denote the reduced
residual vector containing only the entries indexed by $I(\vec{X}^j)$,
and let $J^I$ be the Jacobian of $G_h^I$ restricted to the inactive
set. The update is then decomposed as
\begin{equation}
  \delta \vec{X}^j = \delta^I \vec{X}^j \oplus \delta^A \vec{X}^j,
\end{equation}
where $\delta^A \vec{X}^j = \vec{0}$, and $\delta^I \vec{X}^j$ satisfies
\begin{equation}\label{eq:solve_step}
  J^I \delta^I \vec{X}^j = -G_h^I(\vec{X}^j).
\end{equation}
The updated solution is then computed as
\begin{equation}\label{eq:reduced_space_update}
  \vec{X}^{j+1} = \Pi\left(\vec{X}^j + \alpha \delta \vec{X}^j\right),
\end{equation}
where $\Pi$ denotes the projection onto the constrained space
$K_{h,\,N}$, as defined in \eqref{eq:proj_Rn}. A line search is used
to determine the step size $\alpha$, employing either backtracking
or a secant-based algorithm provided by PETSc. For further details on
the selection of $\alpha$, we refer the reader to
\cite{benson2006flexible, zhu2021bound}.

Importantly, due to the projection in \eqref{eq:reduced_space_update},
all iterates satisfy the box constraints, i.e., $\vec{X}^j \in
K_{h,\,N}$ for all $j$. The algorithm terminates when either the
residual or the relative reduction in the residual falls below a
specified tolerance, set to $10^{-8}$ in this work.

\section{Numerical experiments}\label{sec:numerics}

In this section, we present a summary of numerical experiments that
validate the convergence rates predicted by the theoretical results of
the previous sections. Additionally, we illustrate the effectiveness
of the bound-preserving method in handling less smooth cases, where
rigorous theoretical guarantees are unavailable.

\subsection{Example 1: convergence to a smooth solution}\label{sec:example_1}

We first examine a test case for which we expect the exact solution to possess higher
regularity than functions in $\operatorname{H}_-(\W)$ (cf. Remark
\ref{rem:regularity_remarks}). To this end, let $\W = (0,1)\times (0,1)$, and let $\vec
b_1 = (1, \sqrt{2})$, so that the inflow boundary is
\begin{equation*}
  \Gamma^+ 
  = 
  \{(x, y) \in \partial \W : x = 0\} 
  \cup 
  \{(x, y) \in \partial \W : y =0\}.
\end{equation*}
Smooth inflow boundary data is prescribed:
\begin{equation}
  g_1(x, y) := \begin{cases}
    \exp\left(1 - \frac{1}{1 - 5(x-\frac 1 2)^2}\right) 
    & \text{if $|x-\frac 1 2| < \frac{1}{\sqrt{5}}$},\\
          0 & \text{otherwise}.
   \end{cases}
\end{equation}
Finally, let $c \equiv 1$ and let the right hand side $f$ be identically zero. Then an
exact solution $u_1 \in C^{\infty}(\overline{\W})$ can be found using the method of
characteristics. One finds that 

\begin{equation}\label{eq:u_1}
  u_1(x, y)
  =
  \begin{cases}
    g_1\left(x - \frac{y}{\sqrt{2}}, y\right)
    \exp\left( - \frac{y}{\sqrt{2}}\right) 
    & \text{if  $-\sqrt{\frac 2 5} 
       < y - x \sqrt{2} + \frac{1}{\sqrt 2}
       < \sqrt{\frac 2 5} $},\\
          0 & \text{otherwise}.
   \end{cases}
\end{equation} 
Convergence results of the bound-preserving finite element method for piecewise linear
and quadratic elements are shown in \autoref{fig:firedrake_rates_1}. In both cases the
theoretical convergence rate of $k + \tfrac 1 2$ shown in Corollary \ref{cor:rates_1} is
attained. The number of iterations required by the variational inequality solver is
shown in \autoref{fig:firedrake_iterations_1}. Note that with decreasing $h$ the number
required is either stable or decreasing.

\begin{figure}
  \begin{subfigure}{.48\linewidth}
\begin{tikzpicture}[scale=0.93]
  \begin{axis}[
      width =\linewidth,
      xmode=log, ymode=log,
      xmin=5e-4, xmax=1e-2,
      ymin=1e-6, ymax=5e-1,
      grid=both,
      major grid style={black!50},
      xlabel = \(h\),
      ylabel = \(\tripleNorm{u-u_h}\),
      legend style={at={(0.0,1)},anchor=north west}
  ]
  \addplot[solid, mark=square*, mark options={scale=1, solid}, color={black!100},
           line width=1.0] coordinates {
    (7.8125e-3, 1.18e-02)
    (3.91e-3, 4.36e-03)
    (1.95e-3, 1.56e-03)
    (9.77e-4, 5.51e-04)};
  \addlegendentry{\(k = 1\)}

  \addplot[solid, mark=*, mark options={scale=1,solid}, color={black!100}, 
           line width=1.0] coordinates {
    (7.8125e-3, 5.92e-04 )
    (3.91e-3, 1.10e-04)
    (1.95e-3, 1.98e-05)
    (9.77e-4, 3.51e-06)};
    \addlegendentry{\(k = 2\)}

    \addplot[dashed, mark = square*,
             mark options={scale=1,solid, fill = white, fill opacity=0},
             color={black!100}, line width=1.0] coordinates {
      (7.8125e-3, 8 * 6.91e-4)
      (3.91e-3, 8 * 2.44e-4)
      (1.95e-3, 8 * 8.61e-5)
      (9.77e-4, 8 * 3.05e-5)};
      \addlegendentry{\(\mathcal{O}(h^{3 \slash 2})\)}
  
    \addplot[dashed, mark =*,
             mark options={scale=1,solid,fill = white, fill opacity=0},
             color={black!100}, line width=1.0] coordinates {
      (7.8125e-3, 50 * 5.39e-06)
      (3.91e-3, 50 * 9.56e-07)
      (1.95e-3, 50 * 1.68e-07)
      (9.77e-4, 50 * 2.98e-08)};
      \addlegendentry{\(\mathcal{O}(h^{5/2})\)}

  \end{axis}
  \end{tikzpicture}
  \caption{\label{fig:firedrake_rates_1} Approximation error for Example 1,
  \S\ref{sec:example_1}, smooth solution.}
\end{subfigure}
\hfill
\begin{subfigure}{.48\linewidth}
  \begin{tikzpicture}[scale=0.93]
    \begin{axis}[
        width = \linewidth,
        xmode=log, ymode=log,
        xmin=5e-4, xmax=1e-2,
        ymin=1e-2, ymax=0.6,
        grid=both,
        major grid style={black!50},
        xlabel = \(h\),
        ylabel = \(\tripleNorm{u-u_h}\),
        legend style={at={(0.0,1)},anchor=north west}
    ]
    \addplot[solid, mark=square*, mark options={scale=1, solid}, color={black!100}, 
             line width=1.0] coordinates {
      (7.8125e-3, 1.94e-01)
      (3.91e-3, 1.45e-01)
      (1.95e-3, 1.09e-01)
      (9.77e-4, 8.09e-02)};
    \addlegendentry{\(k = 1\)}

    \addplot[solid, mark=*, mark options={scale=1,solid}, color={black!100}, 
             line width=1.0] coordinates {
      (7.8125e-3, 1.62e-01 )
      (3.91e-3, 1.18e-01)
      (1.95e-3, 8.5e-02)
      (9.77e-4, 6.13e-02)};
      \addlegendentry{\(k = 2\)}
    
  \addplot[dashed, mark = square*,mark options={scale=1,solid, fill = white, 
           fill opacity=0},color={black!100}, line width=1.0] coordinates {
    (7.8125e-3, 1.5 * 8.84e-2)
    (3.91e-3, 1.5 * 6.25e-2)
    (1.95e-3, 1.5 * 4.42e-2)
    (9.77e-4, 1.5 * 3.13e-2)};
    \addlegendentry{\(\mathcal{O}(h^{1 \slash 2})\)}

    \end{axis}
    \end{tikzpicture}
    \caption{\label{fig:firedrake_rates_2}Approximation error for Example 2,
    \S\ref{sec:example_2}, solution in \(\operatorname{H}_-(\W) \backslash
    \sobh{1}(\W)\) only.}
  \end{subfigure}
  \caption{\label{fig:linear_example_rates}Approximation errors for the bound-preserving
  finite element method in the full SUPG
  norm $\tripleNorm{\cdot}$. Polynomial degrees $k =1,2$ shown, yielding expected
  convergence rates of $3\slash 2$ and $5 \slash 2$ respectively in the smooth case. As
  expected when the solution has minimal regularity, theoretical rates are not attained
  and there is no benefit to increasing polynomial degree.}
\end{figure}
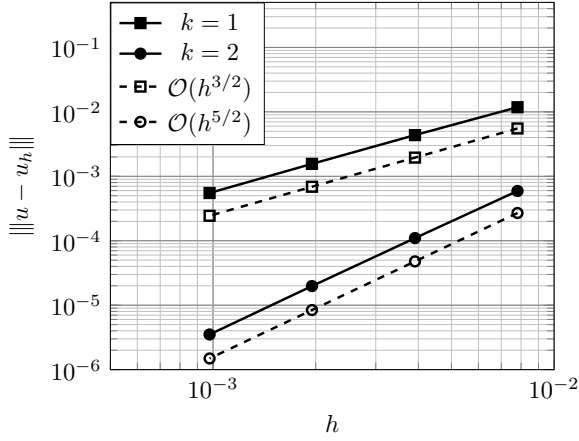
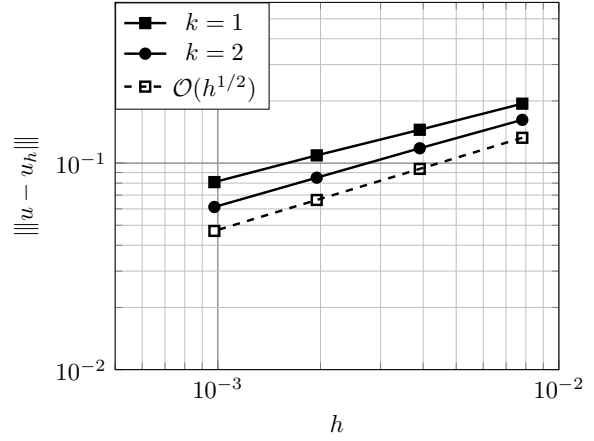

\begin{figure}
  \begin{subfigure}{.48\linewidth}
\begin{tikzpicture}
  \begin{axis}[
      width =\linewidth,
      xmode=log,
      xmin=5e-4, xmax=1e-2,
      ymin=0, ymax=25,
      grid=none,
      major grid style={black!50},
      xlabel = \(h\),
      ylabel = \(N\),
      legend style={at={(0.0,1)},anchor=north west}
  ]
  \addplot[solid, mark=square*, mark options={scale=1, solid}, color={black!100}, 
           line width=1.0] coordinates {
    (7.8125e-3, 7)
    (3.91e-3, 8)
    (1.95e-3, 9)
    (9.77e-4, 9)};
  \addlegendentry{\(k = 1\)}

  \addplot[solid, mark=*, mark options={scale=1, solid}, color={black!100}, 
           line width=1.0] coordinates {
    (7.8125e-3, 22)
    (3.91e-3, 22)
    (1.95e-3, 3)
    (9.77e-4, 1)};
  \addlegendentry{\(k = 2\)}
  \end{axis}
  \end{tikzpicture}
  \caption{\label{fig:firedrake_iterations_1} $u \in \sobh{3}(\W)$.}
\end{subfigure}
\hfill
\begin{subfigure}{.48\linewidth}
  \begin{tikzpicture}
    \begin{axis}[
        width = \linewidth,
        xmode=log, 
        xmin=5e-4, xmax=1e-2,
        ymin=0, ymax=60,
        grid=none,
        major grid style={black!50},
        xlabel = \(h\),
        ylabel = \(N\),
        legend style={at={(1,1)},anchor=north east}
    ]
    \addplot[solid, mark=square*, mark options={scale=1, solid}, color={black!100}, 
             line width=1.0] coordinates {
      (7.8125e-3, 10)
      (3.91e-3, 12)
      (1.95e-3, 15)
      (9.77e-4, 15)};
    \addlegendentry{\(k = 1\)}

    \addplot[solid, mark=*, mark options={scale=1,solid}, color={black!100}, 
             line width=1.0] coordinates {
      (7.8125e-3, 23 )
      (3.91e-3, 35)
      (1.95e-3, 47)
      (9.77e-4, 57)};
      \addlegendentry{\(k = 2\)}

    \end{axis}
    \end{tikzpicture}
    \caption{\label{fig:firedrake_iterations_2} \(u \in \operatorname{H}_-(\W)
    \backslash \sobh{1}(\W)\).}
  \end{subfigure}
  \caption{\label{fig:linear_example_iterations}Active-set reduced-space iteration
  counts until the residuals or their relative reduction reach $10^{-8}$. Note that for
  the smooth solution the number of iterations required is stable as the mesh is
  refined, while in the nonsmooth case number iterations increase.}
\end{figure}
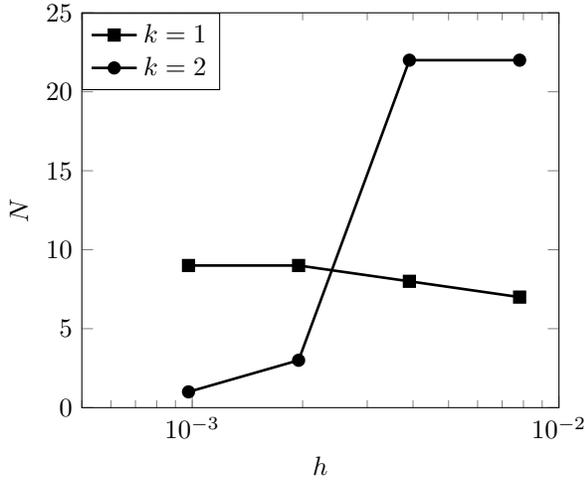
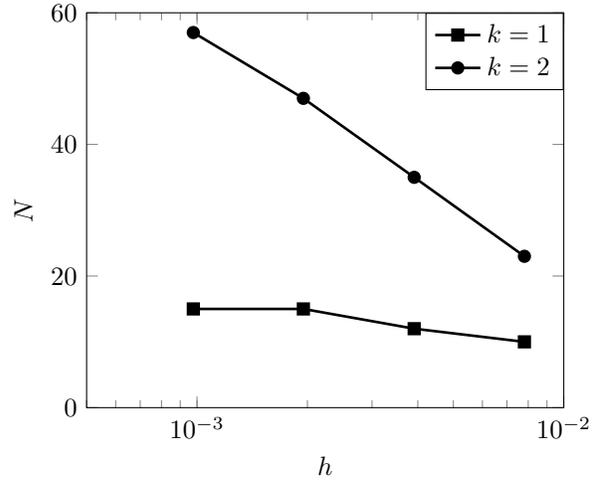

\subsection{Example 2: linear reaction with discontinuous boundary data}
\label{sec:example_2}

We now consider a more challenging variation on Example 1 where the boundary
condition is discontinuous:
\begin{equation}
  g_2(x, y) := \begin{cases}
    1 & \text{if $|x-\frac 1 2| < \frac{1}{\sqrt{5}}$},\\
          0 & \text{otherwise}.
   \end{cases}
\end{equation}
A solution can still be found using the method of characteristics, but this solution
inherits the discontinuity as it is propagated along the characteristics from the inflow
boundary. We will therefore not have the necessary regularity on the solution for error
estimates given in \ref{cor:rates_1} to hold, since the interpolation estimates are too
limited by the lack of regularity - indeed in light of Remark
\ref{rem:regularity_remarks}, we only expect the solution to lie in the graph space. The
particular challenge of this example is the spurious oscillations which are expected to
be exacerbated by the discontinuity. The exact solution is given by
\begin{equation}\label{eq:u_2}
  u_2(x, y)
  =
  \begin{cases}
    \exp\left( - \frac{y}{\sqrt{2}}\right) 
    & \text{if  $-\sqrt{\frac 2 5} 
    < y - x \sqrt{2} + \frac{1}{\sqrt 2}
    < \sqrt{\frac 2 5} $},\\
          0 & \text{otherwise}.
   \end{cases}
\end{equation}
In Figure \ref{fig:firedrake_rates_2} we display convergence rates for the
bound-preserving finite element method, and in Figure \ref{fig:ex_2_x_sections} a
comparison with the standard SUPG case without enforcing bounds is presented. As
expected, we do not obtain the desired convergence rate, but a stable solution is
obtained, with much reduced numerical artefacts, as seen in
\autoref{fig:ex_2_x_sections}.

\begin{figure}[htp!]
  \centering
  \begin{subfigure}{.4\linewidth}
      \includegraphics[width=\textwidth]{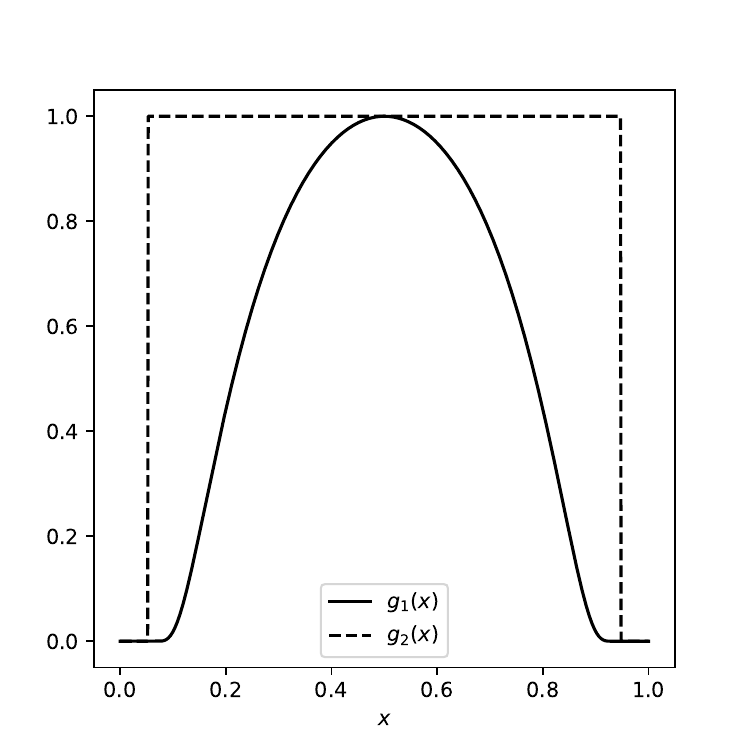}
      \caption{An illustration of the boundary conditions $g_1$ and $g_2$ chosen for 
      Examples 1 \& 2.}
  \end{subfigure}
  \hfil
  \begin{subfigure}{.5\linewidth}
      \includegraphics[width=\textwidth]{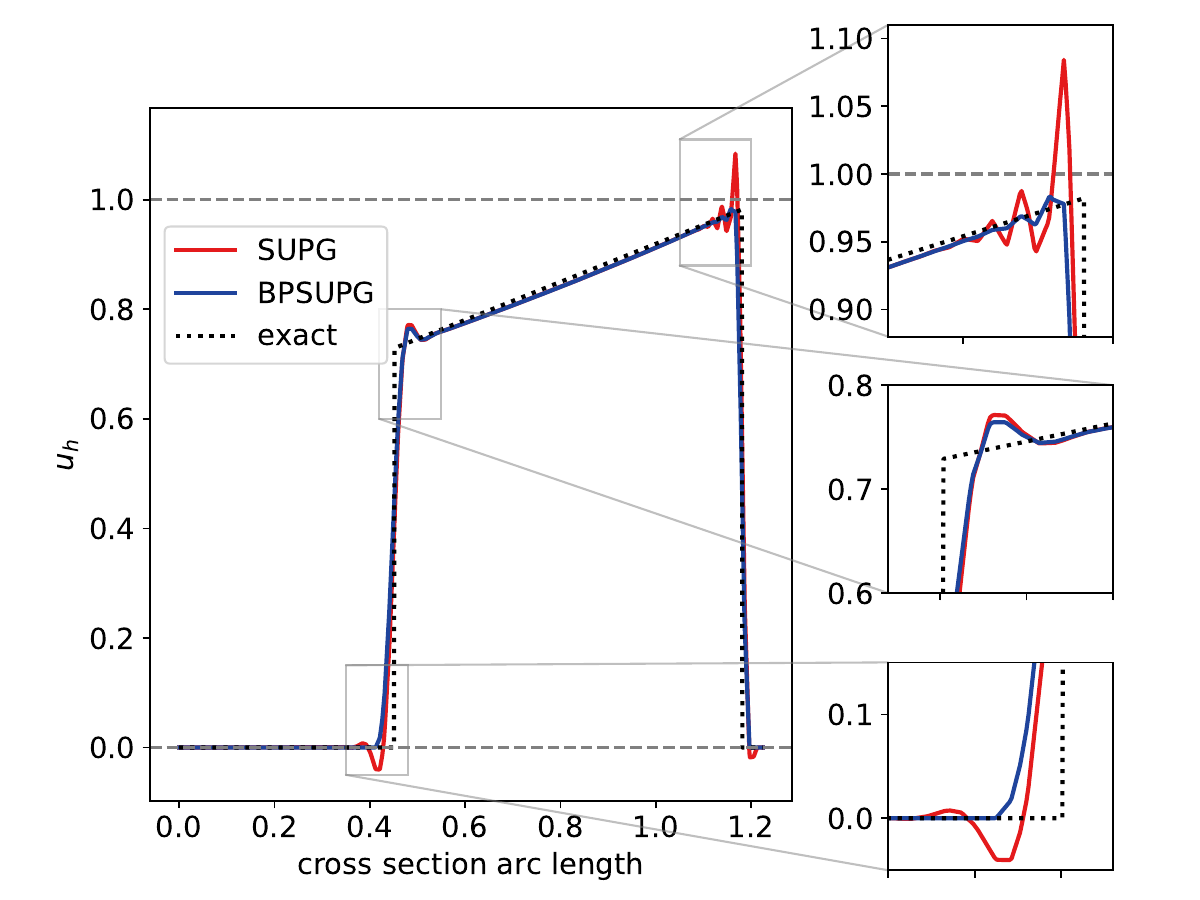}
      \caption{\label{fig:ex_2_x_sections} Cross section of exact and numerical
      solutions along the line perpendicular to the wind field \(\vec{b}_1\) and passing
      through the point \((1, 0)\).}
  \end{subfigure}
  \caption{Note that the standard SUPG solution exhibits oscillations which cause the
   violation of the analytical bounds on the solution, and that this issue is 
   remedied by the use of the bound-preserving method.}
\end{figure}

\subsection{Example 3: A discontinuous pure advection problem}
\label{sec:interior_layer}

To demonstrate the effectiveness of the bound preserving method for a problem with low
regularity and where violation of the maximum principle is expected when standard finite
element methods are used, we consider a pure advection problem with discontinuous data
on the inflow boundary. Numerical solutions to such problems typically suffer from
spurious under- and over-shoots as the discontinuity is propagated by the velocity
field. The advection field is chosen to be 
\begin{equation}\label{eq:3rd_wind}
  \vec b_3(x, y) 
  :=
  \frac{1}{\sqrt{x^2 +y^2}}(-y, x),
\end{equation}
so that the inflow boundary is in this case defined by
\begin{equation*}
  \Gamma^+ 
  = 
  \{(x, y) \in \partial \W : x = 1\} 
  \cup 
  \{(x, y) \in \partial \W : y =0\}.
\end{equation*}
Boundary data is prescribed as follows:
\begin{equation}\label{eq:3rd_bc}
  g_3(x, y) := \begin{cases}
    0 & \text{if $x < \frac{1}{3}$},\\
    \frac 1 2 & \text{if $\frac{1}{3} \leq x < \frac{2}{3}$},\\
    1 & \text{otherwise},
   \end{cases}
\end{equation}
resulting in a discontinuous piecewise constant solution having $\operatorname{H}_-(\W)$
regularity but no better, illustrated in \autoref{fig:contour_layered}. Numerical
results  are shown in \autoref{fig:rotational_wind}, which demonstrate both the strength
and weakness of this method. Indeed, the bound-preserving SUPG method (BPSUPG) satisfies
the bound $u_h(\vec x) \in [0,1]\,\, \forall \vec x \in \W$, and the oscillations that
occur near $u_h = 0$ and $u_h = 1$ are completely removed (see
\autoref{fig:rotation_with_zooms}), but oscillations which lie away from the minimum and
maximum values are not alleviated by the method.

\begin{figure}
  \centering 
\subcaptionbox{\label{fig:contour_layered} Visualisation of the piecewise constant exact
solution to Example 3. }
{\includegraphics[width=.48\linewidth]{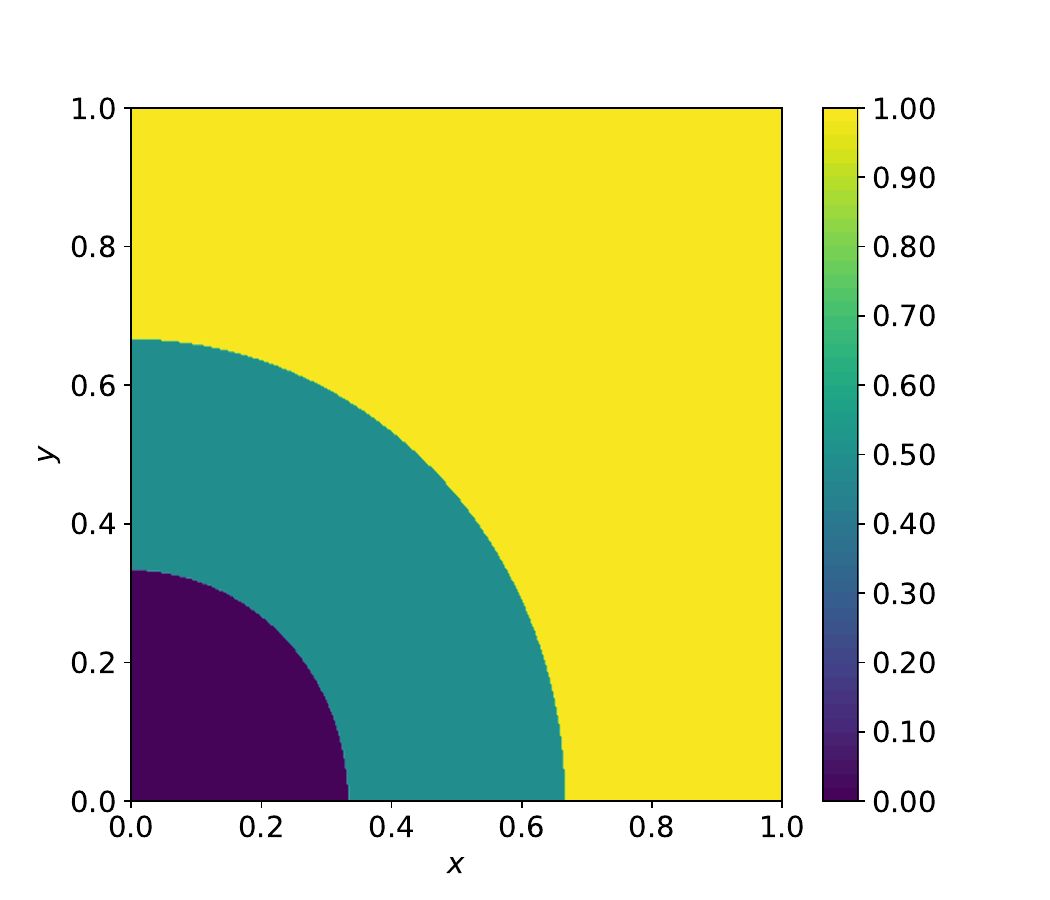}}
\hfill
\subcaptionbox{\label{fig:rotation_with_zooms} Cross section of exact and numerical
solutions along the line $y = x$.
}{\includegraphics[width=.48\linewidth]{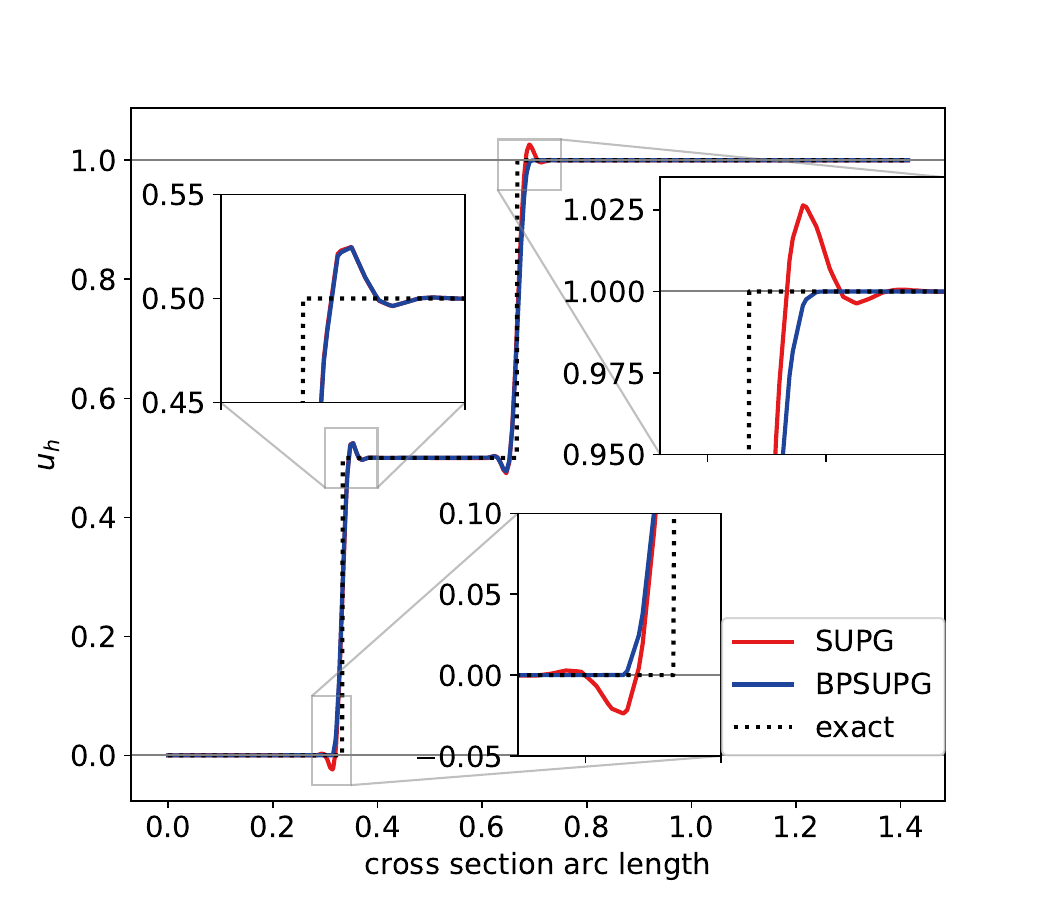}}
\caption{\label{fig:rotational_wind} Visualisations of exact and numerical solutions to
Example 3, \S\ref{sec:interior_layer}, The numerical solution obtained from the
variational inequality problem and discretised with BPSUPG using $\mathbb{P}_1$
elements, $h = 2^{-7}$, is shown in blue, while the regular SUPG finite element solution
with the same discretisation parameters is shown in red. The dotted line is the exact
solution, and grey horizontal lines indicate bounds on the exact solution which are
known a priori.}
\end{figure}

\subsection{Example 4: nonlinear reaction with smooth boundary data}
\label{sec:example_4}

We now consider the following problem
\begin{equation}\label{eq:nonlinear_pde_2}
  \begin{split}
  \vec b \cdot \grad u + c |u|^{-\frac 1 2}u &= 0 \quad \text{in}\,\, \W\\
  u &= g \quad \text{on}\,\, \Gamma_-,
  \end{split}
\end{equation}
which we can consider as a re-scaling of Equation \eqref{eq:nonlinear_pde}. We note that
an exact solution for this problem can be found using the method of characteristics as
long as $\vec b$ is sufficently well-behaved. Indeed, assuming that the boundary data is
non-negative, we consider the behaviour of $u$ along a characteristic which originates
at $\vec x_0$ on $\Gamma_-$. Then if the characteristic containing $\vec x_0$ is
parameterised by $t$, we have 
\begin{equation}\label{eq:characteristic_ode}
  \frac{\diff}{\diff t} u(\vec x(t)) + c |u|^{-\frac 1 2}u = 0.
\end{equation}
This ordinary differential equation is well-posed for all initial conditions $u_0$,
with solution given by (see also \autoref{fig:char_sol})
\begin{equation}\label{eq:exact_solution_4}
  u(\vec x(t))
  =
  \operatorname{sgn} (u_0)
  \left(\max\left\{0, \sqrt{|u_0|} - \frac{ct}{2}\right\}\right)^2.
\end{equation}
Thus, the solution approaches zero quadratically for $t < 2 \sqrt{|u_0|} \slash c$ and
is zero for all $ \geq 2 \sqrt{|u_0|} \slash c$.
\begin{figure}
  \centering
\subcaptionbox{\label{fig:char_sol}Solution to
\autoref{eq:characteristic_ode} with $u_0 = 1 \slash 2$ and $c=4$.}
{\includegraphics[width=.48\linewidth]{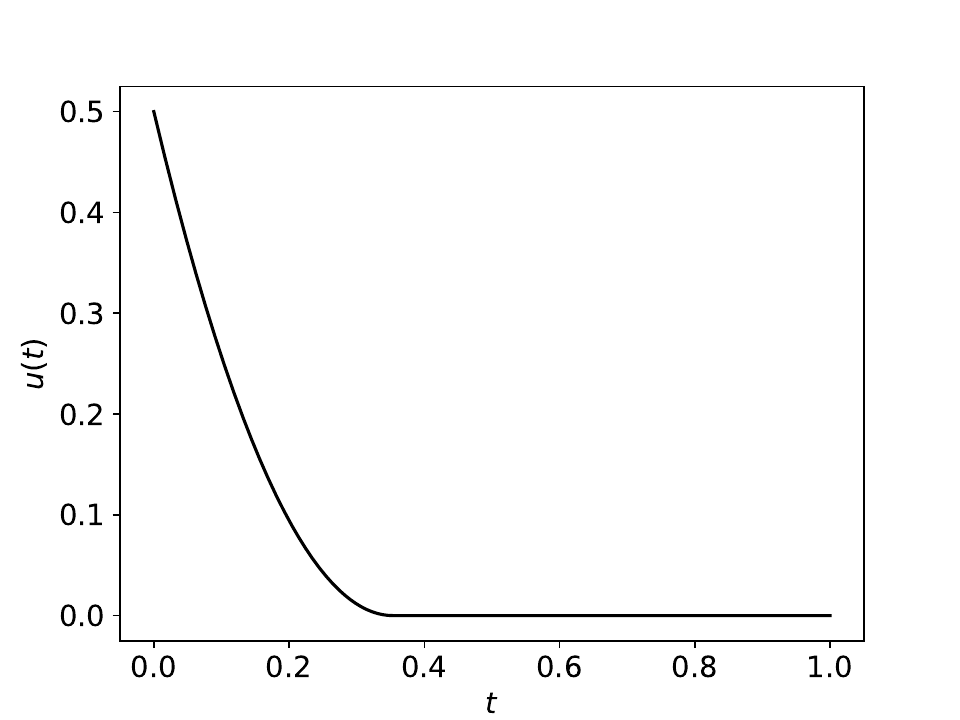}}
\hfill
\subcaptionbox{\label{fig:char_surf} Contour plot of exact solution for Example
4.}{\includegraphics[width=.48\linewidth]{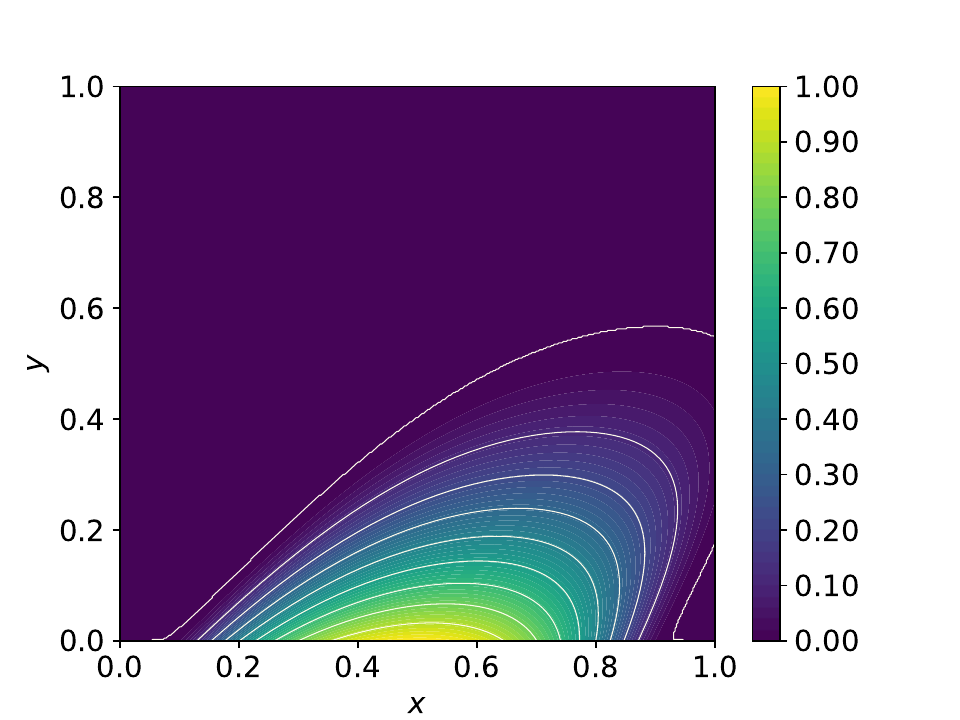}}
\caption{\label{fig:nonlinear_solutions}Visualisations of the exact solution to Example
4, given by \autoref{eq:u_3}.}
\end{figure}

Along a characteristic, the exact solution is quadratic until it reaches zero in finite
time given by $2 \sqrt{|u_0|}\slash c$ and remains zero thereafter. It is therefore
continuously differentiable, but with a second (weak) derivative which is discontinuous.
If $u_0 = 0$, the solution is trivial. Following these observations, we deduce that if
the boundary data is bounded in $[0,1]$, then the solution $u$ of
\autoref{eq:nonlinear_pde_2} satisfies the same bounds.

The inflow boundary data is set to be $g_1$, with $\vec b_1$ again chosen for the
advection field. It follows from the computation of the characteristics defined by $\vec
b$ together with the \autoref{eq:exact_solution_4} that the exact solution to this
problem is given by

\begin{equation}\label{eq:u_3}
  u_3(x, y)
  =
  \begin{cases}
    \left(\max\left\{0, \sqrt{g_1\left(x - \frac{y}{\sqrt{2}}, y\right)}
    - c \frac{y}{2\sqrt{2}}\right\}\right)^2
    & \text{if  $-\sqrt{\frac 2 5} 
    < y - x \sqrt{2} + \frac{1}{\sqrt 2}
    < \sqrt{\frac 2 5} $},\\
          0 & \text{otherwise}.
   \end{cases}
\end{equation} 
Since $g_1$ is smooth, the above observations imply that $u_3 \in \sobh{2}(\W)$. A
visualisation of the exact solution given in \autoref{eq:u_3} is provided in
\autoref{fig:char_surf}.

The reduced-space active set method is again used to solve the discrete variational
inequality, but this time an approximate Jacobian is used in the solve step, Equation
\eqref{eq:solve_step}, to avoid the singularity in the reaction coefficient. Since the
Jacobian is singular around $u = 0$, a small amount of regularisation was added and
observed to improve solver performance.

In \autoref{fig:nonlinear_firedrake_rates_1} we display convergence rates for the
bound-preserving finite element method. As long as the SUPG parameters are chosen to be
of the appropriate order, $\mathcal{O}(h^{3\slash 2})$ convergence is obtained.

\subsection{Example 5: nonlinear reaction with discontinuous boundary data}
\label{sec:example_5}

We test the nonlinear case with boundary data $g_2$ and advection field $\vec b_1$. As
above, an exact solution can be derived for this problem, however with discontinuous
boundary condition, the resulting solution has minimal regularity. Convergence rates are
shown in \autoref{fig:nonlinear_firedrake_rates_2}, again affected by the lack of
regularity of the exact solution in this case. A comparison of exact and numerical
solutions is given in \autoref{fig:discontinuous_nonlinear_zooms}

\begin{figure}
  \centering
  \includegraphics[width=0.7\textwidth]{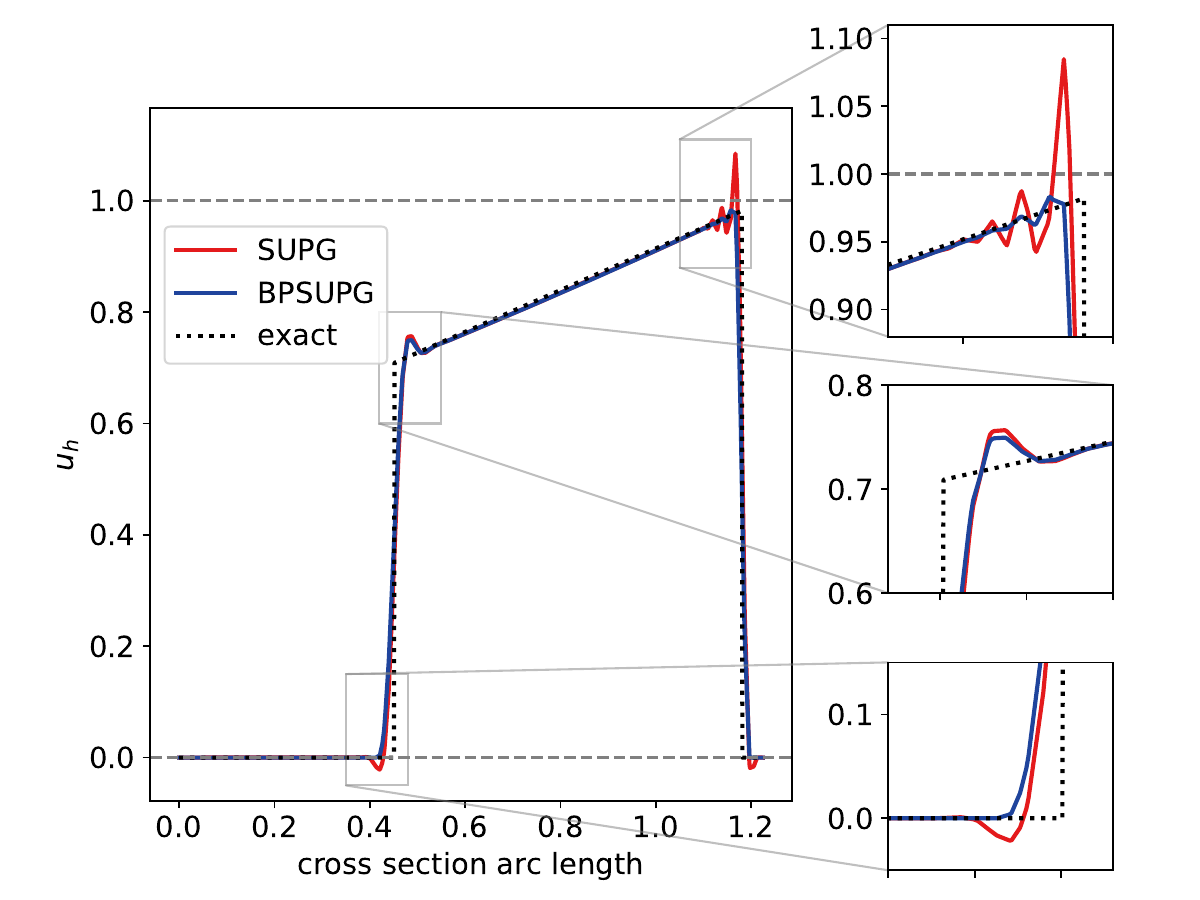}
  \caption{\label{fig:discontinuous_nonlinear_zooms}Cross section of exact and numerical
  solutions along the line perpendicular to the wind field \(\vec{b}_1\) and passing
  through the point \((1, 0)\).}
\end{figure}

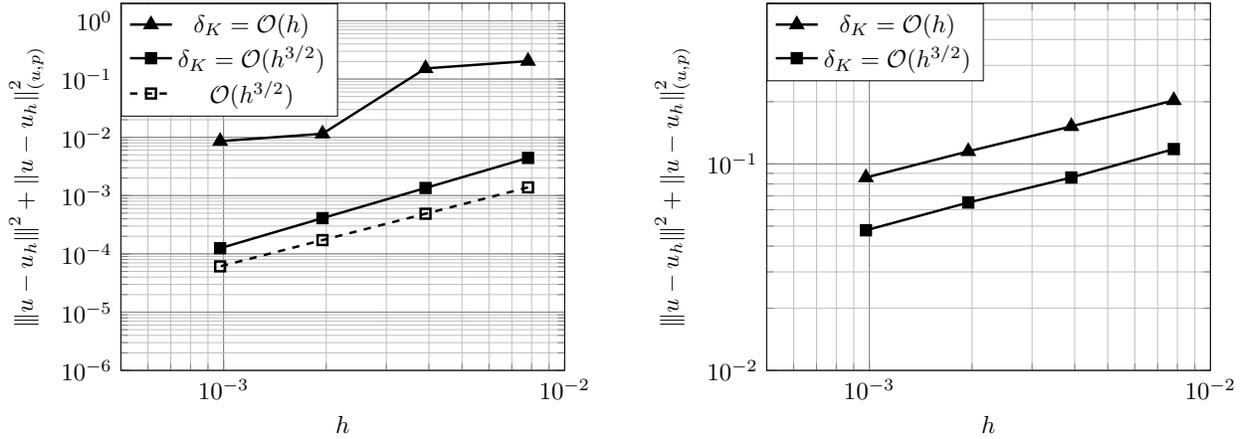
\begin{figure}
  \begin{subfigure}{.48\linewidth}
\begin{tikzpicture}[scale=0.93]
  \begin{axis}[
      width =\linewidth,
      xmode=log, ymode=log,
      xmin=5e-4, xmax=1e-2,
      ymin=1e-6, ymax=2,
      grid=both,
      major grid style={black!50},
      xlabel = \(h\),
      ylabel = \( \tripleNorm{u-u_h}^2
      +
      \Norm{u-u_h}^2_{(u,p)}\),
      legend style={at={(0.0,1)},anchor=north west}
  ]

  \addplot[solid, mark=triangle*, mark options={scale=1.4,solid}, color={black!100}, 
           line width=1.0] coordinates {
    (7.8125e-3, 2.03e-01 )
    (3.91e-3, 1.52e-01)
    (1.95e-3, 1.15e-02)
    (9.77e-4, 8.59e-03)};
    \addlegendentry{\(\delta_K = \mathcal{O}(h)\)}

  \addplot[solid, mark=square*, mark options={scale=1, solid}, color={black!100},
           line width=1.0] coordinates {
    (7.8125e-3, 4.43e-03)
    (3.91e-3, 1.35e-03)
    (1.95e-3, 4.10e-04)
    (9.77e-4, 1.25e-04 )};
  \addlegendentry{\(\delta_K = \mathcal{O}(h^{3\slash 2})\)}

    \addplot[dashed, mark = square*,
             mark options={scale=1,solid, fill = white, fill opacity=0},
             color={black!100}, line width=1.0] coordinates {
      (7.8125e-3, 2 * 6.91e-4)
      (3.91e-3, 2 * 2.44e-4)
      (1.95e-3, 2 * 8.61e-5)
      (9.77e-4, 2 * 3.05e-5)};
      \addlegendentry{\(\mathcal{O}(h^{3 \slash 2})\)}
  

  \end{axis}
  \end{tikzpicture}
  \caption{\label{fig:nonlinear_firedrake_rates_1} Approximation error for Example 4
  \S\ref{sec:example_4}, solution in \(\sobh{2}(\W)\).}
\end{subfigure}
\hfill
\begin{subfigure}{.48\linewidth}
  \begin{tikzpicture}[scale=0.93]
    \begin{axis}[
        width = \linewidth,
        xmode=log, ymode=log,
        xmin=5e-4, xmax=1e-2,
        ymin=1e-2, ymax=0.6,
        grid=both,
        major grid style={black!50},
        xlabel = \(h\),
        ylabel = \( \tripleNorm{u-u_h}^2
      +
      \Norm{u-u_h}^2_{(u,p)}\),
        legend style={at={(0.0,1)},anchor=north west}
    ]
    \addplot[solid, mark=triangle*, mark options={scale=1.4,solid}, color={black!100}, 
    line width=1.0] coordinates {
    (7.8125e-3, 2.03e-01 )
    (3.91e-3, 1.52e-01)
    (1.95e-3, 1.15e-01)
    (9.77e-4, 8.59e-02)};
    \addlegendentry{\(\delta_K = \mathcal{O}(h)\)}
    \addplot[solid, mark=square*, mark options={scale=1, solid}, color={black!100}, 
             line width=1.0] coordinates {
      (7.8125e-3, 1.18e-01)
      (3.91e-3, 8.59e-02)
      (1.95e-3, 6.50e-02)
      (9.77e-4, 4.77e-02  )};
    \addlegendentry{\(\delta_K = \mathcal{O}(h^{3\slash 2})\)}


    \end{axis}
    \end{tikzpicture}
    \caption{\label{fig:nonlinear_firedrake_rates_2} Approximation error for Example 5
    \S\ref{sec:example_5}, solution in \(u \in \operatorname{H}_-(\W) \backslash
    \sobh{1}(\W)\) only.}
  \end{subfigure}
  \caption{\label{fig:nonlinear_example_rates}Approximation errors for the
  bound-preserving finite element method in the full SUPG $+$ quasi norm.  The choice of
  SUPG parameters greatly affects the performance of the method. }
\end{figure}

\section{Conclusions and Further Work}

In this work, we introduced a method for preserving bounds on
numerical solutions to partial differential equations by reformulating
the standard discrete weak form as a discrete variational
inequality. The finite element method was analysed for both a linear
first-order problem and a first-order problem with a nonlinear
reaction term. The SUPG method was adapted to this bound-preserving
framework, and the expected convergence rates were successfully
retained.

While certain aspects of the analysis are tailored to the specific
problems considered, the proposed framework is broadly applicable and
can, in principle, be extended to any problem admitting a variational
formulation. This flexibility enables the application of the method to
a wide range of partial differential equations using existing finite
element techniques.

The approach outlined here shows particular promise for problems
involving singular coefficients or where strict preservation of bounds
is essential to ensure physical validity. Future work will explore the
extension of this framework to kinetic equations
\cite{AshbyChronholmHajnalLukyanovMacKenziePimPryer:2024,pim2024optimal},
where ensuring positivity of solutions is important physically, and to
models of non-Newtonian fluids \cite{ashby2025discretisation}, which
feature complex nonlinearities and require positive preserving schemes
to ensure well-posedness. Key to all these applications is the
efficient solution and computational scalability of this class of
method.


\begin{thebibliography}{HKM{\etalchar{+}}23}

\bibitem[ABP24]{amiri2024nodally}
  A. Amiri, G.R. Barrenechea, and T. Pryer.
\newblock A nodally bound-preserving finite element method for
  reaction--convection--diffusion equations.
\newblock {\em Mathematical Models and Methods in Applied Sciences},
  34(08):1533--1565, 2024.

\bibitem[ACH{\etalchar{+}}24]{AshbyChronholmHajnalLukyanovMacKenziePimPryer:2024}
  B.S. Ashby, V. Chronholm, D.K. Hajnal, A. Lukyanov, K.  MacKenzie,
  A. Pim, and T. Pryer.  \newblock Efficient proton transport
  modelling for proton beam therapy and biological quantification.
  \newblock {\em arXiv preprint arXiv:2411.16735}, 2024.

\bibitem[Alf94]{alfredo1994iterative}
  N. Alfredo.
\newblock An iterative algorithm for the variational inequality problem.
\newblock {\em Computational and Applied Mathematics}, 13:103--114, 1994.

\bibitem[AP25]{ashby2025discretisation}
  B.S. Ashby and T. Pryer.
\newblock Discretisation of an Oldroyd-B viscoelastic fluid flow using a lie
  derivative formulation.
\newblock {\em Advances in Computational Mathematics}, 51(1):1, 2025.

\bibitem[BE05]{BE05}
E. Burman and A. Ern.
\newblock Stabilized {G}alerkin approximation of convection-diffusion-reaction
  equations: discrete maximum principle and convergence.
\newblock {\em Math. Comp.}, 74(252):1637--1652 (electronic), 2005.

\bibitem[Ber12]{bernard2012steady}
J.M. Bernard.
\newblock Steady transport equation in the case where the normal component of
  the velocity does not vanish on the boundary.
\newblock {\em SIAM Journal on Mathematical Analysis}, 44(2):993--1018, 2012.

\bibitem[BGMS98]{balay1998petsc}
S. Balay, W. Gropp, L.C. McInnes, and B.F. Smith.
\newblock PETSC, the portable, extensible toolkit for scientific computation.
\newblock {\em Argonne National Laboratory}, 2(17), 1998.

\bibitem[BGPV24]{barrenechea2024nodally}
G.R. Barrenechea, E.H. Georgoulis, T. Pryer, and A.
  Veeser.
\newblock A nodally bound-preserving finite element method.
\newblock {\em IMA Journal of Numerical Analysis}, 44(4):2198--2219, 2024.

\bibitem[BH82]{brooks1982streamline}
A.N. Brooks and T.J.R. Hughes.
\newblock Streamline upwind/Petrov-Galerkin formulations for convection
  dominated flows with particular emphasis on the incompressible Navier-Stokes
  equations.
\newblock {\em Computer methods in applied mechanics and engineering},
  32(1-3):199--259, 1982.

\bibitem[BHR77]{brezzi1977error}
F. Brezzi, W.W. Hager, and P.A. Raviart.
\newblock Error estimates for the finite element solution of variational
  inequalities: Part i. primal theory.
\newblock {\em Numerische Mathematik}, 28(4):431--443, 1977.

\bibitem[BJK17]{BJK17}
G.R. Barrenechea, V. John, and P. Knobloch.
\newblock An algebraic flux correction scheme satisfying the discrete maximum
  principle and linearity preservation on general meshes.
\newblock {\em Math. Models Methods Appl. Sci.}, 27(3):525--548, 2017.

\bibitem[BJK23]{BJK23}
G.R. Barrenechea, V. John, and P. Knobloch.
\newblock Finite element methods respecting the discrete maximum principle for
  convection-diffusion equations.
\newblock {\em SIAM Review}, 2023.
\newblock to appear.

\bibitem[BL93]{barrett1993finite}
J.W. Barrett and W.B. Liu.
\newblock Finite element approximation of the $p$-laplacian.
\newblock {\em Mathematics of computation}, 61(204):523--537, 1993.

\bibitem[BM06]{benson2006flexible}
S.J. Benson and T.S. Munson.
\newblock Flexible complementarity solvers for large-scale applications.
\newblock {\em Optimization Methods and Software}, 21(1):155--168, 2006.

\bibitem[BPT24]{BarrenecheaPryerTrenam:2024}
G.R. Barrenechea, T. Pryer, and A. Trenam.
\newblock A nodally bound-preserving discontinuous galerkin method for the
  drift-diffusion equation.
\newblock {\em arXiv preprint arXiv:2410.05040}, 2024.

\bibitem[CEL{\etalchar{+}}24]{calloo2024cycle}
A. Calloo, M. Evans, H. Lockyer, F. Madiot, T.
  Pryer, and L. Zanetti.
\newblock Cycle-free polytopal mesh sweeping for Boltzmann transport.
\newblock {\em arXiv preprint arXiv:2412.01660}, 2024.

\bibitem[CJ21]{CelledoniJackaman:2021}
E. Celledoni and J. Jackaman.
\newblock Discrete conservation laws for finite element discretisations of
  multisymplectic pdes.
\newblock {\em Journal of Computational Physics}, 444:110520, 2021.

\bibitem[CN17]{chang2017variational}
J. Chang and K.B. Nakshatrala.
\newblock Variational inequality approach to enforcing the non-negative
  constraint for advection--diffusion equations.
\newblock {\em Computer Methods in Applied Mechanics and Engineering},
  320:287--334, 2017.

\bibitem[CR73]{CR73}
P.G. Ciarlet and P.A. Raviart.
\newblock Maximum principle and uniform convergence for the finite element
  method.
\newblock {\em Comput. Methods Appl. Mech. Engrg.}, 2:17--31, 1973.

\bibitem[EG21]{EG21-I}
A. Ern and J.L. Guermond.
\newblock {\em Finite Elements I}.
\newblock Springer, 2021.

\bibitem[Fal74]{falk1974error}
R.S. Falk.
\newblock Error estimates for the approximation of a class of variational
  inequalities.
\newblock {\em Mathematics of Computation}, 28(128):963--971, 1974.

\bibitem[GMP14]{GiesselmannMakridakisPryer:2014}
J. Giesselmann, C. Makridakis and T. Pryer.
\newblock Energy consistent discontinuous galerkin methods for the
  Navier--Stokes--Korteweg system.
\newblock {\em Mathematics of Computation}, 83(289):2071--2099, 2014.

\bibitem[GT10]{girault2010regularite}
V. Girault and L. Tartar.
\newblock R{\'e}gularit{\'e} dans \(\operatorname{L}^p\) et
  \(\operatorname{W}^{1,\,p}\) de la solution d'une {\'e}quation de transport
  stationnaire.
\newblock {\em Comptes rendus. Math{\'e}matique}, 348(15-16):885--890, 2010.

\bibitem[HGR08]{stynes_robust}
L. Tobiska H.G. Roos, M. Stynes.
\newblock {\em Robust Numerical Methods for Singularly Perturbed Differential
  Equations}.
\newblock Springer Berlin, Heidelberg, 2008.

\bibitem[HKM{\etalchar{+}}23]{FiredrakeUserManual}
  D.A. Ham, et. al.
\newblock {\em Firedrake User Manual}.
\newblock Imperial College London and University of Oxford and Baylor
  University and University of Washington, first edition edition, 5 2023.

\bibitem[JNP84]{johnson1984finite}
C. Johnson, U. Navert and J. Pitkaranta.
\newblock Finite element methods for linear hyperbolic problems.
\newblock {\em Computer methods in applied mechanics and engineering},
  45:285--312, 1984.

\bibitem[KS24a]{keith2024proximal}
B. Keith and T.M. Surowiec.
\newblock Proximal Galerkin: A structure-preserving finite element method for
  pointwise bound constraints.
\newblock {\em Foundations of Computational Mathematics}, pages 1--97, 2024.

\bibitem[KS24b]{kirby2024high}
R.C. Kirby and D. Shapero.
\newblock High-order bounds-satisfying approximation of partial differential
  equations via finite element variational inequalities.
\newblock {\em Numerische Mathematik}, pages 1--21, 2024.

\bibitem[Kuz07]{Kuz07}
D. Kuzmin.
\newblock Algebraic flux correction for finite element discretizations of
  coupled systems.
\newblock In M.~Papadrakakis, E.~O{\~n}ate, and B.~Schrefler, editors, {\em
  Proceedings of the Int.~Conf.~on Computational Methods for Coupled Problems
  in Science and Engineering}, pages 1--5. CIMNE, Barcelona, 2007.

\bibitem[Liu00]{liu2000finite}
W. Liu.
\newblock Finite element approximation of a nonlinear elliptic equation arising
  from bimaterial problems in elastic-plastic mechanics.
\newblock {\em Numerische Mathematik}, 86:491--506, 2000.

\bibitem[MH85]{MH85}
A. Mizukami and T.J.R. Hughes.
\newblock A {P}etrov-{G}alerkin finite element method for convection-dominated
  flows: an accurate upwinding technique for satisfying the maximum principle.
\newblock {\em Comput. Methods Appl. Mech. Engrg.}, 50(2):181--193, 1985.

\bibitem[PPT24]{pim2024optimal}
A. Pim, T. Pryer and A. Trenam.
\newblock Optimal control of a kinetic equation.
\newblock {\em arXiv preprint arXiv:2412.10747}, 2024.

\bibitem[Rau72]{rauch1972l2}
J. Rauch.
\newblock L2 is a continuable initial condition for Kreiss' mixed problems.
\newblock {\em Communications on Pure and Applied Mathematics}, 25(3):265--285,
  1972.

\bibitem[RR06]{renardy2006introduction}
M. Renardy and R.C. Rogers.
\newblock {\em An introduction to partial differential equations}, volume~13.
\newblock Springer Science \& Business Media, 2006.

\bibitem[SL18]{schroeder2018divergence}
P.W. Schroeder and G. Lube.
\newblock Divergence-free H(div)-fem for time-dependent incompressible flows
  with applications to high Reynolds number vortex dynamics.
\newblock {\em Journal of Scientific Computing}, 75(2):830--858, 2018.

\bibitem[SP22]{scott2022transport}
L.R. Scott and S. Pollock.
\newblock Transport equations with inflow boundary conditions.
\newblock {\em Partial Differential Equations and Applications}, 3(3):35, 2022.

\bibitem[WGZ22]{wang2022unconditionally}
C. Wang, Y. Guo and Z. Zhang.
\newblock Unconditionally energy stable and bound-preserving schemes for
  phase-field surfactant model with moving contact lines.
\newblock {\em Journal of Scientific Computing}, 92(1):20, 2022.

\bibitem[WY24]{wangdiscrete}
S. Wang and G. Yuan.
\newblock Discrete strong extremum principles for finite element solutions of
  advection-diffusion problems with nonlinear corrections.
\newblock {\em International Journal for Numerical Methods in Fluids},
  96(12):1990--2005, 2024.

\bibitem[XZ99]{XZ99}
J. Xu and L. Zikatanov.
\newblock A monotone finite element scheme for convection-diffusion equations.
\newblock {\em Math. Comp.}, 68(228):1429--1446, 1999.

\bibitem[ZYK{\etalchar{+}}21]{zhu2021bound}
Z. Zhu, H. Yang, J. Kou, T. Cheng, and S. Sun.
\newblock Bound-preserving inexact Newton algorithms on parallel computers for
  wormhole propagation in porous media.
\newblock {\em Computers and Geotechnics}, 138:104340, 2021.

\end{thebibliography}
\newcommand{\etalchar}[1]{$^{#1}$}

\end{document}